\newcommand{\metric}[2]{\ensuremath{\langle #1, #2\rangle}}  
\newcommand{\nks}{\ensuremath{S^3\times S^3}}   
\newcommand{\lcc}{\ensuremath{\tilde \nabla}}   
\renewcommand{\epsilon}{\varepsilon}            
\newcommand{\ijk}{\ensuremath{\epsilon_{ijk}}}  
\renewcommand{\aa}{\ensuremath{\alpha\cdot\alpha}}  
\newcommand{\ab}{\ensuremath{\alpha\cdot\beta}}     
\newcommand{\bb}{\ensuremath{\beta\cdot\beta}}      
\DeclareMathOperator{\re}{Re}    
\DeclareMathOperator{\im}{Im}    
\newtheorem{theorem}{Theorem}[section]   
\newtheorem*{theorem*}{Theorem}          
\newtheorem{lemma}[theorem]{Lemma}
\newtheorem{proposition}[theorem]{Proposition}
\theoremstyle{definition}
\newtheorem{corollary}[theorem]{Corollary}
\newtheorem{example}{Example}
\newtheorem{remark}{Remark}
\title[Almost complex surfaces in~$\nks$]{Almost complex surfaces in the nearly~K\"ahler~$\nks$}
\author{J.~Bolton\and F.~Dillen\and B.~Dioos\and L.~Vrancken}
\address{John Bolton, Durham University, Department of Mathematical Sciences,
  Science Laboratories,
  South Rd.,
  Durham DH1 3LE}
\email{John.Bolton@durham.ac.uk}
\address{Franki Dillen, Bart Dioos, Luc Vrancken,
  KU\ Leuven, Departement Wiskunde, Celestijnenlaan 200B,
  3001 Leuven, Belgium}
\email{Franki.Dillen@wis.kuleuven.be, Bart.Dioos@wis.kuleuven.be}
\address{Luc Vrancken, LAMAV, Universit\'e de Valenciennes, Campus du Mont Houy, 59313 Valenciennes Cedex 9, France}
\email{Luc.Vrancken@univ-valenciennes.fr}
\subjclass[2010]{Primary 53C40; Secondary 53C42}   
\keywords{Almost complex surface, constant mean curvature surface, $H$-surface equation, holomorphic differential, minimal surface, nearly K\"ahler manifold}
\begin{document}

\begin{abstract}

 In this paper we initiate the study of almost complex surfaces of the nearly K\"ahler~$\nks$. 
We show that on such a surface it is possible to define a global holomorphic differential, 
which is induced by an almost product structure on the nearly K\"ahler~$\nks$. 
We also find a local correspondence between almost complex surfaces in the  nearly K\"ahler~$\nks$ and
 solutions of the general $H$-system equation introduced by Wente (\cite{wente}), thus obtaining
 a geometric interpretation of solutions of the general $H$-system equation. 
From this we deduce a correspondence between constant mean curvature surfaces in $\mathbb R^3$ and
 almost complex surfaces in the nearly K\"ahler~$\nks$ with vanishing holomorphic differential.
 This correspondence allows us to obtain a classification of the totally geodesic almost complex surfaces. 
Moreover, we prove that almost complex topological 2-spheres in~$\nks$ are totally geodesic. 
Finally, we also show that every almost complex surface with parallel second fundamental form is totally geodesic.
\end{abstract}

\maketitle

\section*{Introduction}

Nearly K\"ahler manifolds have been studied intensively in the 1970's by Gray~\cite{graynearlykahler}.
These nearly K\"ahler manifolds are almost Hermitian manifolds with almost complex structure~$J$ for which the tensor field~$\nabla J$
is skew-symmetric. In particular, the almost complex structure is non-integrable if the manifold
is non-K\"ahler. A well known example is the nearly K\"ahler 6-dimensional sphere, whose almost complex structure $J$
can be defined in terms of the vector cross product on~$\mathbb{R}^7$. Recently it has been shown
by Butruille~\cite{butruille} that the only homogeneous 6-dimensional nearly K\"ahler manifolds are
the nearly K\"ahler 6-sphere, $\nks$, the projective space~$\mathbb{C}P^3$ and the flag manifold $SU(3)/U(1)\times U(1)$.
All these spaces are compact 3-symmetric spaces.

There are two natural types of submanifolds of nearly K\"ahler (or more generally, almost Hermitian) manifolds,
namely almost complex and totally real submanifolds. Almost complex submanifolds are submanifolds
whose tangent spaces are  invariant under $J$. Almost complex submanifolds
in the nearly K\"ahler manifold $S^6$ have been studied by many authors
(see~e.g.\ \cite{boltonpeditwoodward}, \cite{bolton}, \cite{bryant}, \cite{dillenalmostcomplex}, \cite{grayalmostcomplex}, \cite{sekigawa}). 
Also in the nearly K\"ahler $\mathbb CP^3$ some results have been obtained in \cite{xufeng}.

In this paper we initiate the study of almost complex submanifolds of~$\nks$.
Six-dimensional non-K\"ahler nearly K\"ahler manifolds do not admit 4-dimensional almost complex
submanifolds (\cite{podesta}), so the almost complex submanifolds are surfaces.
  The paper is organized as follows: the basics on nearly K\"ahler manifolds and submanifold theory will
be recapitulated in the first section.
In Section~2 we will discuss the nearly K\"ahler structure and an almost product structure~$P$ on~$\nks$. Whereas in the previous works of a.o. \cite{butruille} the structure is presented in terms of Lie groups, here we will present everything using the classical structure on $S^3 \times S^3$. This allows us to remark that the nearly K\"ahler metric, up to a constant factor, corresponds to the Hermitian metric associated to the standard metric on  $S^3 \times S^3$.
In Section~3 it will be shown that to every simply connected almost complex
surface~$M$ in~$\nks$ one can associate a surface in Euclidean 3-space. This associated surface~$\epsilon$
satisfies the $H$-surface equation
\[
  \epsilon_{uu} + \epsilon_{vv} = -\frac{4}{\sqrt{3}} \epsilon_u \times \epsilon_v,
\]
see \cite{wente}. Note that this correspondence works in both directions. This equation also implies that $g(P\phi_z,\phi_z)\,dz^2$ is a holomorphic differential.
Furthermore, under the assumption~$PTM\subset T^\perp M$, i.e. if the holomorphic differential vanishes, the $H$-surface has constant mean curvature.
These results enable us to prove the following theorems.
\begin{theorem*}
 If $M$ is an almost complex surface of $\nks$  with parallel second fundamental form, then $M$ is totally geodesic.
\end{theorem*}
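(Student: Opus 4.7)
The plan is to combine the usual Codazzi argument with the correspondence to the $H$-system established in Section~3. Since the second fundamental form $h$ is parallel, one has $|h|^2$ constant, and the Codazzi equation collapses to the algebraic identity
\[
  (R^{\nks}(X,Y)Z)^\perp = 0\qquad\text{for all }X,Y,Z\in TM.
\]
Using the explicit form of the curvature of $\nks$ from Section~2 (which is built out of the nearly K\"ahler metric, $J$, $\nabla J$ and the almost product structure $P$), together with the skew-symmetry of $\nabla J$ and the fact that $JTM = TM$, this identity should be expanded to obtain restrictions on the position of $P(TM)$ relative to $TM$ and $T^\perp M$. In parallel, I would differentiate the holomorphic differential $g(P\phi_z,\phi_z)\,dz^2$ using parallelness of $h$ and the known behaviour of $P$ under the nearly K\"ahler connection, to show that its modulus is constant, so the differential either vanishes identically or is nowhere zero.

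The first case to handle is $PTM\subset T^\perp M$, i.e.\ the holomorphic differential vanishes. Here the correspondence of Section~3 associates to $M$ a CMC surface $\epsilon$ in $\mathbb{R}^3$ satisfying Wente's $H$-system. The parallelness of $h$ on $M$ forces the induced data on $\epsilon$ to be covariantly constant, so that $\epsilon$ must be totally umbilic, i.e.\ (a piece of) a round sphere. Tracing this back through the correspondence, and using the classification of totally geodesic almost complex surfaces announced in the introduction, $M$ must be totally geodesic.

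In the remaining case the holomorphic differential is nowhere vanishing. The aim is to derive a contradiction directly from the Codazzi-type consequence $(R^{\nks}(X,Y)Z)^\perp=0$: a nowhere zero value of $g(P\phi_z,\phi_z)$ fixes $P$ relative to an adapted frame $\{e_1,Je_1\}$ on $M$, and plugging this into the explicit expression for $R^{\nks}$ (together with $\nabla h=0$ and the $(\nabla J)$-identity) should over-determine the structure equations. I expect this second case to be the main obstacle, since it requires genuine computation with the curvature of $\nks$ and with $\nabla P$, whereas the first case follows essentially formally from the correspondence. Once both cases are closed, the theorem follows.
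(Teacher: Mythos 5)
Your overall skeleton (split according to whether the holomorphic differential $g(P\phi_z,\phi_z)\,dz^2$ vanishes, and use the CMC correspondence in the vanishing case) matches the paper's strategy, but both of your cases have genuine holes, and the key computational tool the paper uses is missing from your plan. First, in the case $PTM\subset T^\perp M$ you assert that parallelness of $h$ on $M$ ``forces the induced data on $\epsilon$ to be covariantly constant, so that $\epsilon$ must be totally umbilic.'' This is unjustified: the correspondence of Section~3 relates the immersions $\phi$ and $\epsilon$ and (when the differential vanishes) their metrics, but no relation between the second fundamental form of $M$ in $\nks$ and that of $\epsilon$ in $\mathbb{R}^3$ is established, and none is obvious. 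Even granting a transfer of parallelness, your inference fails: a surface in $\mathbb{R}^3$ with parallel second fundamental form need not be umbilic --- the circular cylinder is such a surface, it has the right constant mean curvature, and it is \emph{not} totally umbilic; its preimage under the correspondence would not be totally geodesic. The paper avoids all of this by transferring only an \emph{intrinsic} quantity: Theorem~\ref{thm:curvature} (proved via Gauss, Codazzi, the Ricci equation and the Ricci identity for $\nabla^2h$) first pins the Gaussian curvature of a non-totally-geodesic parallel $M$ down to the single value $5/18$, so that by Corollary~\ref{cor:metrics} the associated CMC surface would have constant curvature $5/9$; this is then incompatible with the classification of CMC surfaces of constant Gaussian curvature (plane, cylinder, sphere have curvatures $0$, $0$, $H^2=4/3$).

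Second, the case where the differential does not vanish is not proved at all in your proposal --- you explicitly defer it as ``the main obstacle'' and only express the expectation that the Codazzi consequence $(\tilde R(X,Y)Z)^\perp=0$ will over-determine the structure equations. It will not, by itself: that identity only forces $\tilde R(v,Jv)v$ to be tangent, which constrains the position of $P$ but gives no quantitative control on $h$. The decisive input in the paper is the pair consisting of the Ricci equation for $R^\perp$ and the Ricci identity applied to $\nabla^2h$ (which vanishes by parallelness): choosing $v$ to maximize $g(Pv,v)$, these combine with the Gauss equation to yield the quadratic relation $\tfrac32 K^2-\tfrac1{12}K=0$ in this case, and since $K=-2\|h(v,v)\|^2\le 0$ one gets $K=0$ and hence $h=0$ directly --- no appeal to the $H$-system is needed here. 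Without this normal-bundle computation (or a substitute for it), neither your dichotomy ``$|\Lambda|$ constant, hence $\Lambda\equiv 0$ or nowhere zero'' nor your second case can be closed, so the proposal as it stands does not constitute a proof.
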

\begin{theorem*}
 An almost complex topological 2-sphere~$S^2$ in the nearly~K\"ahler~$\nks$ is totally geodesic.
\end{theorem*}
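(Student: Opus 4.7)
The plan is to combine the global holomorphic quadratic differential introduced in Section~3 with the Wente $H$-surface correspondence, reducing the problem to Hopf's classical theorem on constant mean curvature spheres in~$\mathbb{R}^3$.

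First, I would use the global holomorphic differential $g(P\phi_z,\phi_z)\,dz^2$. Since $M$ is topologically a $2$-sphere, it is conformally equivalent to $\mathbb{CP}^1$, and the space of holomorphic quadratic differentials is trivial (by Riemann--Roch, or directly from the classification of line bundles on $\mathbb{CP}^1$). Consequently the differential vanishes identically on $M$, which is precisely the condition $PTM\subset T^\perp M$ at every point.

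Next, because $S^2$ is simply connected, the construction of Section~3 produces an associated map $\epsilon\colon S^2\to\mathbb{R}^3$ solving the Wente $H$-surface equation. By the result quoted in the introduction, the vanishing of the holomorphic differential forces $\epsilon$ to have constant mean curvature. Hopf's theorem on CMC immersions of genus zero then implies that (after verifying $\epsilon$ is a genuine immersion, or after extending across isolated critical points) $\epsilon(S^2)$ is a round sphere in $\mathbb{R}^3$ of the radius dictated by the coefficient~$4/\sqrt{3}$.

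Finally, I would invoke the classification of totally geodesic almost complex surfaces obtained via the same correspondence earlier in the paper: the totally geodesic examples are precisely those whose associated $H$-surface is a round sphere. Together with the local bijectivity of the Section~3 correspondence, this forces $M$ itself to be totally geodesic. The main obstacle I expect is this last identification: one must verify that the correspondence sends totally geodesic almost complex surfaces onto \emph{all} round spheres of the correct radius (so that no modulus is left undetermined when going back from $\epsilon(S^2)$ to $M$), and one must control the global behaviour of $\epsilon$ near any critical points in order to apply Hopf's theorem directly. Both should follow from an explicit description of the totally geodesic examples that is presumably established when the classification is proved.
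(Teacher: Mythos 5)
Your first two steps coincide exactly with the paper's own proof: the holomorphic differential $\Lambda\,dz^2$ of Lemma~\ref{lem:hol} must vanish on a topological $2$-sphere, which by Lemma~\ref{lem:equiv} gives $PTM\subset T^\perp M$; then Theorem~\ref{thm:epsilon} produces an associated CMC surface $\epsilon\colon S^2\to\mathbb{R}^3$ with $H=-2/\sqrt{3}$, and Hopf's theorem makes it a round sphere. Incidentally, your worry about critical points of $\epsilon$ is unfounded: $\Lambda=0$ means $\aa=\bb$ and $\ab=0$, while $(\alpha,\beta)\neq(0,0)$ because $\phi$ is an immersion, so $\epsilon$ is automatically a conformal immersion; this is precisely the content of the metric relation $g=2g'$ in Corollary~\ref{cor:metrics}.

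The genuine gap is in your final step. You invoke a classification asserting that ``the totally geodesic examples are precisely those whose associated $H$-surface is a round sphere'' and then pull total geodesy back through the correspondence of Theorem~\ref{thm:correspondencethm}. No such classification is available at this stage: in the paper it is established \emph{after} the present theorem, and --- worse --- the total geodesy of the relevant model (Example~2, the sphere $x\mapsto\tfrac12(1-\sqrt{3}x,\,1+\sqrt{3}x)$) is itself deduced in the paper \emph{from} the very theorem you are trying to prove, so your argument as written is circular. It could be repaired by verifying by direct computation that Example~2 is totally geodesic and that its associated surface is a round sphere of the correct radius, and only then applying the correspondence theorem. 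But the paper's own finish is both simpler and self-contained, and it is the idea your proposal is missing: a totally umbilical CMC sphere with $H=-2/\sqrt{3}$ has Gaussian curvature $H^2=4/3$; the relation $g=2g'$ of Corollary~\ref{cor:metrics} halves curvature, so the almost complex sphere has $K=2/3$; and the Gauss equation for almost complex surfaces with $PTM\subset T^\perp M$, namely $K=\tfrac{2}{3}-2\|h(v,v)\|^2$, then forces $h\equiv 0$ directly, with no appeal to any classification or to the uniqueness part of the correspondence.
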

The latter result marks a difference from the case of the nearly K\"ahler 6-sphere: there exists
an immersion from~$S^2(1/6)$ in $S^6$ which is not totally geodesic (see \cite[\S\,5, Example 2]{sekigawa}).

In the final section, we give two examples of totally geodesic almost complex surfaces
in~$\nks$. In the first example $P$ maps tangent vectors to tangent vectors; 
in the second one~$P$ maps tangent vectors into normal ones. Furthermore we show that any almost complex surface with
parallel second fundamental form is locally~congruent to one of these two examples.

\section{Preliminaries}
\label{sec:intro}

An almost Hermitian manifold~$(\tilde M,g,J)$ is a manifold endowed with an almost complex structure~$J$
that is compatible with the metric~$g$, i.e.\ an endomorphism $J\colon T \tilde M \to T\tilde M$
such that $J_p^2=-\mathrm{Id}$ for every $p\in \tilde M$ and  $g(JX,JY)=g(X,Y)$ for all $X,Y \in T \tilde M$.
A nearly K\"ahler manifold is an almost Hermitian manifold with the extra condition that
the $(1,2)$-tensor field $G= \lcc J$ is skew-symmetric:
\[
(\lcc_X J)Y + (\lcc_Y J)X = 0  \qquad \text{for every $X$, $Y \in T\tilde M$}.
\]
Here $\lcc$ stands for the Levi-Civita connection of the metric~$g$.
A number of properties hold for this tensor field (\cite{belgun}, \cite{graynearlykahler}):
\begin{align}
 G(X,Y) + G(Y,X)=0,\\
 G(X,JY) + JG(X,Y) = 0,\\
 g(G(X,Y),Z) + g(G(X,Z),Y)=0, \\
 \overline \nabla J = 0.
\end{align}
The canonical Hermitian connection~$\overline\nabla$ is
defined by $\overline\nabla_X Y = \lcc_X Y + \frac{1}{2}(\lcc_X J)JY$.

An almost complex surface $M$ of a nearly~K\"ahler~manifold~$\tilde M$
 is a 2-dimensional submanifold such that the tangent bundle of $M$
is invariant under the almost complex structure, i.e.~$JT M = TM$. 
We denote the Levi-Civita connection on~$M$ by~$\nabla$ and the normal connection on the normal bundle~$T^\perp M$ by~$\nabla^\perp$. 
The formulas of Gauss and Weingarten then are
\begin{align*}
 \lcc_X Y &= \nabla_X Y + h(X,Y),\\
 \lcc_X \xi &= -A_\xi X + \nabla_X^\perp \xi,
\end{align*} 
for tangent vectors~$X$,~$Y$ and a normal vector~$\xi$.
The second fundamental form~$h$ and the shape operator~$A_\xi$ are related to each other
by 
\[
  g(h(X,Y),\xi) = g(A_\xi X,Y).
\]
The Gauss and Weingarten formulas and the properties of~$G$ imply 
\begin{align}
 \nabla_X JX &= J\nabla_X X,           &      h(X,JY) &= Jh(X,Y), \label{eq:ac1}\\
 A_{J\xi} X &= JA_\xi X = - A_\xi JX,  &      G(X,\xi) &= \nabla^\perp_X J\xi - J\nabla_X^\perp \xi. \label{eq:ac2}
\end{align}
see e.g.~\cite{dillenalmostcomplex} or~\cite{sekigawa}.
As an immediate corollary, $M$ itself is nearly K\"ahler and minimal.
Moreover, since each tangent space $T_p M$ is spanned by a unit vector~$X$ and $JX$, 
$G(X,Y)=0$ for every $X,Y \in TM$ and thus $M$ is K\"ahler.

We denote the curvature tensor of~$\lcc$,~$\nabla$ and~$\nabla^\perp$ by~$\tilde R$,~$R$ and~$R^\perp$
respectively. The equations of Gauss, Codazzi and~Ricci then are
\begin{align*}
 &R(X,Y)Z = \bigl(\tilde R(X,Y)Z\bigr)^\top + A_{h(Y,Z)}X - A_{h(X,Z)}Y,\\
 &(\nabla h)(X,Y,Z) - (\nabla h)(Y,X,Z) = \bigl(\tilde R(X,Y)Z\bigr)^\perp,\\
& g\bigl(R^\perp(X,Y)\xi,\eta\bigr) = g\bigl(\tilde  R(X,Y)\xi,\eta\bigr) + g\bigl([A_\xi,A_\eta]X,Y\bigr),
\end{align*}
where~$X,Y,Z\in TM$, ~$\xi, \eta\in T^\perp M$ and $\nabla h$ is defined by
$\nabla^\perp_X h(Y,Z) -h(\nabla_X Y, Z) - h(Y,\nabla_X Z)$. 
A submanifold is called parallel if~$\nabla h$ is zero everywhere.
The second derivative~$\nabla^2 h$ of~$h$ is defined in a similar way by 
\begin{align*}
(\nabla^2 h)(X,Y,Z,W) &= \nabla^\perp_X (\nabla h)(Y,Z,W) -(\nabla h)(\nabla_X Y,Z,W)\\
                          &\quad -(\nabla h)(Y,\nabla_X Z,W) - (\nabla h)(Y,Z,\nabla_X W).
\end{align*}
The Ricci identity for~$\nabla^2 h$ then says
\begin{align*}
 (\nabla^2 h)(X,Y,Z,W)-(\nabla^2 h)(Y,X,Z,W) &= R^\perp(X,Y)h(Z,W) \\
                                   & \quad - h(R(X,Y)Z,W) - h(Z,R(X,Y)W).
\end{align*}
Note that the left hand side vanishes if~$M$ is parallel.

\section{The nearly K\"ahler structure on $\nks$}

We consider the $3$-sphere in $\mathbb{R}^4$ as the set of all unit quaternions.
The vector fields $X_1$,~$X_2$ and~$X_3$ given by
\begin{align*}
 X_1(p) &= pi  =   - x_2 + x_1 i + x_4 j - x_3 k, \\
 X_2(p) &= pj  =   - x_3 - x_4 i + x_1 j + x_2 k, \\
 X_3(p) &= -pk =     x_4 - x_3 i + x_2 j - x_1 k
\end{align*}
at the point $p=x_1 + x_2 i + x_3 j + x_4 k$ form a basis of tangent vector fields.
Thus a tangent vector in $T_p S^3$ can be expressed as $p\alpha$ where $\alpha$ is an imaginary
quaternion.
Using the quaternion relations~$ij=k$, $jk=i$ and $ki=j$ one shows that the Lie brackets are given by $[X_i,X_j]= -2\ijk X_k$. 
Here $\ijk$ is the Levi-Civita symbol.

Using the natural identification $T_{(p,q)}(\nks) \cong T_p S^3 \oplus T_q S^3$, we will
write a tangent vector at $(p,q)$ as  $Z(p,q) = \bigl( U(p,q), V(p,q)\bigr)$ or simply $Z=(U,V)$.
Define the vector fields
\begin{align*}
 E_1(p,q) &= (pi,0),   &   F_1(p,q) &= (0,qi),\\
 E_2(p,q) &= (pj,0),   &   F_2(p,q) &= (0,qj),\\
 E_3(p,q) &= -(pk,0),  &   F_3(p,q) &= -(0,qk).
\end{align*}
These vector fields are mutually orthogonal with respect to the usual product metric
on~$\nks$. The Lie brackets are $[E_i,E_j]=-2\ijk E_k$, $[F_i,F_j]=-2\ijk F_k$ and~$[E_i,F_j]=0$.

The almost complex structure $J$ on $\nks$ is defined as
\[
   JZ(p,q) = \frac{1}{\sqrt{3}}\left( 2pq^{-1}V - U, -2qp^{-1}U + V \right)
\]
for $Z \in T_{(p,q)}(S^3\times S^3)$ (see \cite{butruille}).
Furthermore, we define another metric~$g$ on $\nks$ by
\begin{align*}
   g(Z,Z') &= \frac{1}{2} \left(\metric{Z}{Z'} + \metric{JZ}{JZ'}\right)\\
          &= \frac{4}{3} \left(\metric{U}{U'} +  \metric{V}{V'}\right)
             -\frac{2}{3} \left(\metric{p^{-1}U}{q^{-1}V'} +  \metric{p^{-1}U'}{q^{-1}V}\right).
\end{align*}
where $Z=(U,V)$, $Z'=(U',V')$ and $\metric{\cdot\,}{\cdot}$ is the product metric on~$\nks$.
By definition the almost complex structure is compatible with the metric~$g$.
An easy calculation gives
$g(E_i, E_j) = 4/3  \,\delta_{ij}$,
$g(E_i, F_j) = -2/3 \,\delta_{ij}$ and
$g(F_i, F_j) = 4/3  \,\delta_{ij}$.
Note that this metric differs up to a constant factor from the one introduced in \cite{butruille}. 
Here we set everything up so that it equals the Hermitian metric associated with the usual metric. 
In \cite{butruille}, the factor was chosen in such a way that the standard basis $E_1,E_2,E_3,F_1,F_2,F_3$ has volume 1.

\begin{lemma}
\label{lem:levicivita}
The Levi-Civita connection $\tilde \nabla$ on $\nks$ with respect to the metric~$g$ is given by
\begin{align*}
 \lcc_{E_i} E_j &= -\ijk E_k,                     &   \lcc_{E_i} F_j &= \frac{\ijk}{3}(E_k - F_k),\\
 \lcc_{F_i} E_j &= \frac{\ijk}{3} (F_k - E_k),    &   \lcc_{F_i} F_j &= -\ijk F_k.
\end{align*}
\end{lemma}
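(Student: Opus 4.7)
The plan is to use Koszul's formula
\[
 2g(\tilde\nabla_X Y, Z) = Xg(Y,Z) + Yg(X,Z) - Zg(X,Y) + g([X,Y],Z) - g([X,Z],Y) - g([Y,Z],X),
\]
evaluated on the frame $\{E_1,E_2,E_3,F_1,F_2,F_3\}$. The key simplification is that these six vector fields are left-invariant on the Lie group $\nks$ (each is of the form $p\alpha$ in the first factor or $q\beta$ in the second) and the metric $g$ is also left-invariant, so the inner products $g(E_i,E_j)=\frac{4}{3}\delta_{ij}$, $g(E_i,F_j)=-\frac{2}{3}\delta_{ij}$, $g(F_i,F_j)=\frac{4}{3}\delta_{ij}$ noted just above the lemma are constant functions on $\nks$. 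Consequently the three derivative terms in Koszul's formula vanish on the frame, leaving the purely algebraic identity
\[
 2g(\tilde\nabla_X Y, Z) = g([X,Y],Z) - g([X,Z],Y) - g([Y,Z],X).
\]

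From here I would substitute the known brackets $[E_i,E_j]=-2\ijk E_k$, $[F_i,F_j]=-2\ijk F_k$, $[E_i,F_j]=0$ into the right-hand side. For each of the four combinations $(X,Y)\in\{(E_i,E_j),(E_i,F_j),(F_i,E_j),(F_i,F_j)\}$, a quick inspection of the three terms shows that the only basis vectors $Z$ for which the right-hand side is nonzero are $Z=E_k$ and $Z=F_k$, where $k$ is the index singled out by $\ijk$; in particular each covariant derivative already lies in the two-dimensional subspace $\mathrm{span}(E_k,F_k)$.

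Writing $\tilde\nabla_X Y = aE_k+bF_k$ and evaluating $g(\tilde\nabla_X Y,E_k)$ and $g(\tilde\nabla_X Y,F_k)$ then yields a $2\times 2$ linear system with Gram matrix
\[
 \begin{pmatrix} 4/3 & -2/3 \\ -2/3 & 4/3 \end{pmatrix},
\]
which one inverts once and for all to read off $(a,b)$. Performing this for $(E_i,E_j)$ and $(E_i,F_j)$ gives two of the four formulas; the remaining two follow either by the same procedure or, more economically, by observing that $(p,q)\mapsto(q,p)$ is an isometry of $(\nks,g)$ exchanging $E_i\leftrightarrow F_i$. The main obstacle is not conceptual but bookkeeping: one must track signs in Koszul's identity and in the $\ijk$'s carefully through the four separate cases, and in particular notice that the $-\ijk E_k$ appearing in $\tilde\nabla_{E_i}E_j$ has no $F_k$-component only because of a cancellation inside the $2\times 2$ system. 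Apart from that, every step is linear and immediate.
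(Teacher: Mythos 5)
Your proposal is correct and takes essentially the same approach as the paper: the paper's proof likewise applies the Koszul formula to the frame $E_i, F_i$ (the derivative terms dropping out because $g(E_i,E_j)$, $g(E_i,F_j)$, $g(F_i,F_j)$ are constant), records the six resulting inner products $g(\lcc_{E_i}E_j,E_k)$, $g(\lcc_{E_i}E_j,F_k)$, etc., and then solves the linear system --- precisely your $2\times 2$ Gram-matrix step, including the cancellation that kills the $F_k$-component of $\lcc_{E_i}E_j$ despite $g(\lcc_{E_i}E_j,F_k)\neq 0$. Your two refinements (justifying the vanishing of the derivative terms via left-invariance, and deducing half the formulas from the swap isometry $(p,q)\mapsto(q,p)$) are sound but do not change the method.
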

\begin{proof}
  Using the Koszul formula, one finds
  \begin{align*}
   g(\lcc_{E_i} E_j,E_k) &= -\frac{4}{3}\ijk,  & g(\lcc_{F_i} E_j,E_k) &= -\frac{2}{3}\ijk,\\
   g(\lcc_{E_i} E_j,F_k) &= \frac{2}{3}\ijk,   & g(\lcc_{F_i} E_j,F_k) &= \frac{2}{3}\ijk,\\
   g(\lcc_{E_i} F_j,F_k) &= -\frac{2}{3}\ijk, & g(\lcc_{F_i} F_j,F_k) &= -\frac{4}{3}\ijk.
  \end{align*}
   Elementary linear algebra then gives the equations hereabove.
\end{proof}

Now one can verify that
\begin{equation}
\label{eq:G}
\begin{split}
 (\lcc_{E_i} J)E_j &= -\frac{2}{3\sqrt{3}}\ijk (E_k + 2F_k), \\
 (\lcc_{E_i} J)F_j &= -\frac{2}{3\sqrt{3}}\ijk (E_k - F_k), \\
 (\lcc_{F_i} J)E_j &= -\frac{2}{3\sqrt{3}}\ijk (E_k - F_k), \\
 (\lcc_{F_i} J)F_j &=  \phantom{-}\frac{2}{3\sqrt{3}}\ijk (2E_k +F_k).
\end{split}
\end{equation}
Hence $\nks$ is nearly K\"ahler, meaning that the tensor field~$G=\lcc J$
is skew-symmetric.

For unitary quaternions~$a$,~$b$ and~$c$, the map~$F\colon \nks\to\nks$ given by
$(p,q)\mapsto (apc^{-1},bqc^{-1})$ is an isometry of $(\nks,g)$ (cf.\ remark after Lemma~2.2 in \cite{podesta}).
Indeed, $F$ preserves the almost complex structure, since
\begin{align*}
 J dF_{(p,q)}(v,w) &= \frac{1}{\sqrt{3}}\bigl(2(apc^{-1})(cq^{-1}b^{-1})bwc^{-1} - avc^{-1}\bigr.,\\
                         &\quad\quad\quad  \bigl. -2(bqc^{-1})(cp^{-1}a^{-1})avc^{-1}+bwc^{-1}\bigr)\\
                   &= dF_{(p,q)}\bigl(J(v,w)\bigr)
\end{align*}
(see also \cite[Proposition~3.1]{moroianu}) and~$F$ preserves the usual metric~$\metric{\cdot\,}{\cdot}$ as well.

Next, we introduce an almost product structure on $\nks$. For a tangent vector~$Z=(U,V)$ at~$(p,q)$, we define
\[
  PZ = (pq^{-1}V, qp^{-1}U).
\]
It is easily seen that
\begin{enumerate}
\item $P^2 = \mathrm{Id}$,
\item $PJ=-JP$
\item $P$ is compatible with the metric~$g$, i.e. $g(PZ,PZ')=g(Z,Z')$. This also implies that $P$ is symmetric with respect to $g$.
\end{enumerate}
Note that~$PE_i = F_i$ and~$PF_i=E_i$.
From these equations and Lemma~\ref{lem:levicivita} it follows that
\begin{equation}
\label{eq:H}
\begin{split}
  (\lcc_{E_i} P)E_j &= \frac{1}{3}\ijk(E_k +2F_k),\\
  (\lcc_{E_i} P)F_j &= -\frac{1}{3}\ijk(2E_k +F_k),\\
  (\lcc_{E_i} P)F_j &= -\frac{1}{3}\ijk(E_k +2F_k),\\
  (\lcc_{F_i} P)F_j &= \frac{1}{3}\ijk(2E_k +F_k).
\end{split}
 \end{equation}
Thus the endomorphism $P$ is not a product structure, i.e.~the tensor field~$H=\lcc P$ does not vanish identically.
However, the almost product structure~$P$ and tensor field $H$ admit the following properties.

\begin{lemma}
For tangent vectors $X$, $Y$ of $\nks$ the following equations hold:
 \begin{gather}
  PG(X,Y) + G(PX,PY) = 0, \label{imp1}\\
  H(X,JY) = JH(X,Y), \label{eq:hj}\\
  G(X,PY) + PG(X,Y) = -2JH(X,Y),\label{imp2}\\
  H(X,PY) + PH(X,Y)=0,\label{eq:h2}\\
  H(X,Y) + H(PX,Y)=0,\label{eq:h3}\\
  \overline\nabla P=0.
 \end{gather}
\end{lemma}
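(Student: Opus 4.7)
The plan is to observe that all six identities are tensorial in $X$ and $Y$, so it suffices to verify them when $X$ and $Y$ range over the global frame $\{E_1,E_2,E_3,F_1,F_2,F_3\}$. For this frame we already have closed-form expressions for $G=\lcc J$ in (\ref{eq:G}), for $H=\lcc P$ in (\ref{eq:H}), and for the action of $P$ (which swaps $E_i\leftrightarrow F_i$). A short calculation from the definition of $J$ gives $JE_k=-\frac{1}{\sqrt{3}}(E_k+2F_k)$ and $JF_k=\frac{1}{\sqrt{3}}(2E_k+F_k)$, so $J$ is also available in explicit form.

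For the five identities (\ref{imp1}), (\ref{eq:hj}), (\ref{imp2}), (\ref{eq:h2}), (\ref{eq:h3}) I would substitute the basis formulas into the left-hand side and collect. The structural reason these work out is that the combinations $\ijk(E_k+2F_k)$, $\ijk(E_k-F_k)$ and $\ijk(2E_k+F_k)$ which appear in $G$ and $H$ get interchanged or negated under $P$ in exactly the right pattern. For instance, $PG(E_i,E_j)=-\frac{2}{3\sqrt3}\ijk(2E_k+F_k)$ and $G(PE_i,PE_j)=G(F_i,F_j)=+\frac{2}{3\sqrt3}\ijk(2E_k+F_k)$, giving (\ref{imp1}) in that case; the other choices of basis arguments are analogous. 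For (\ref{imp2}) the left-hand side evaluated on $(E_i,E_j)$ collapses to $-\frac{2}{\sqrt3}\ijk E_k$, which matches $-2J H(E_i,E_j)$ after using the formula for $J$ on $E_k+2F_k$.

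For the last identity $\overline{\nabla}P=0$, rather than a direct frame check, I would derive it from~(\ref{imp2}). Using $\overline{\nabla}_XY=\lcc_XY+\tfrac{1}{2}(\lcc_XJ)JY$, a short computation yields
\[
(\overline{\nabla}_XP)Y = H(X,Y)+\tfrac{1}{2}\bigl(G(X,JPY)-PG(X,JY)\bigr).
\]
The nearly-K\"ahler identity $G(X,JW)=-JG(X,W)$ together with $PJ=-JP$ turns the parenthesis into $-J\bigl(G(X,PY)+PG(X,Y)\bigr)$, which by~(\ref{imp2}) equals $-J\cdot(-2JH(X,Y))=-2H(X,Y)$, so $(\overline{\nabla}_XP)Y=0$.

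The expected obstacle is purely bookkeeping: there is no conceptual difficulty once (\ref{eq:G}) and (\ref{eq:H}) are in hand, just care with signs and with the $\ijk$-indices when expanding each case. The only mildly pleasant step is the derivation of the last identity, where (\ref{imp2}) and the $J$-anti-commutation make the computation collapse neatly rather than requiring another basis check.
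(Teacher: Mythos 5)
Your proposal is correct, and your sample computations check out: the frame formulas $JE_k=-\tfrac{1}{\sqrt{3}}(E_k+2F_k)$, $JF_k=\tfrac{1}{\sqrt{3}}(2E_k+F_k)$ are right, the verification of \eqref{imp1} on $(E_i,E_j)$ is exactly as you state, the left side of \eqref{imp2} on $(E_i,E_j)$ does collapse to $-\tfrac{2}{\sqrt{3}}\ijk E_k = -2JH(E_i,E_j)$, and your derivation of $\overline\nabla P=0$ from \eqref{imp2} is sound. The route differs from the paper's in the distribution of labor. The paper verifies only \eqref{imp1} and \eqref{eq:hj} on the frame; it then obtains \eqref{imp2} with no further computation from the identity $G(X,PY)+PG(X,Y)=-H(X,JY)-JH(X,Y)$ (which comes from differentiating the anticommutation relation $PJ=-JP$) combined with \eqref{eq:hj}, and it gets \eqref{eq:h2} and \eqref{eq:h3} by purely formal manipulation of \eqref{imp1} and \eqref{imp2}. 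You instead check all five algebraic identities directly on the frame. Your version costs three extra frame verifications but requires no tricks; the paper's version buys economy and, more importantly, exhibits the logical structure that the remark following the lemma emphasizes, namely that \eqref{imp1} and \eqref{imp2} are the fundamental identities from which the others follow formally. Finally, your proof of $\overline\nabla P=0$ is a mild variant of the paper's: you commute $J$ past $G$ and $P$ first and then apply \eqref{imp2} as stated, while the paper applies \eqref{imp2} with $Y$ replaced by $JY$ and then invokes \eqref{eq:hj}; both collapse to the same cancellation.
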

\begin{proof}
 As all expressions are tensorial, one only has to verify them for the basis vectors $E_i$ and $F_j$.
 The first equation can quickly be verified by~\eqref{eq:G} and the fact that $PE_i = F_i$.
 Similarly one can verify equation~\eqref{eq:hj} using~\eqref{eq:H}.
 Equation~\eqref{imp2} follows from \eqref{eq:hj} since
 \begin{align*}
   G(X,PY) + PG(X,Y) &= - H(X,JY) - J H(X,Y)\\
                     &= -2JH(X,Y).
 \end{align*}
 
 The remaining equations are consequences of~\eqref{imp1} and~\eqref{imp2}.
 For instance for~\eqref{eq:h2} we have
 \begin{align*}
 2 \bigl( H(X,PY)+PH(X,Y)\bigr) &= JG(X,Y)+JPG(X,Y) \\
                               & \quad +PJG(X,PY)+PJPG(X,Y)\\
                                &= JG(X,Y) -JG(X,Y) = 0.
 \end{align*}
 Equation~\eqref{eq:h3} can be proven in a similar way. 
 Finally, we have
 \begin{align*}
    (\overline\nabla_X P)Y &= H(X,Y) - \frac{1}{2}\bigl(G(X,PJY)+ PG(X,JY)\bigr) \\
                           &= H(X,Y) +J H(X,JY) = 0.
 \end{align*}
\end{proof}
Note that in the previous lemma, the most fundamental equations are respectively \eqref{imp1} and \eqref{imp2}. 
The first one relates $P$ and $G$, whereas the second one allows us to express $\lcc P$ as a function of~$J$, $P$ and~$\lcc J$. 
It is also elementary to check that $P$ can be expressed in terms of the usual product stucture $Q:Z=(U,V) \mapsto Q(Z)=(-U,V)$ by
\begin{equation} \label{usualproduct}
QJ(Z)= \frac{1}{\sqrt{3}} (-2PZ +Z).\end{equation}
Note however that the usual product structure is not compatible with the metric $g$ and
does not behave nicely with respect to the almost complex structure $J$.

A straightforward, but rather tedious calculation  now shows that the Riemann curvature tensor~$\tilde R$ on $(\nks,g)$ is given by
\begin{equation*}
 \begin{split}
  \tilde R(U,V)W &= \frac{5}{12}\bigl(g(V,W)U - g(U,W)V\bigr) \\
                 &\quad  +\frac{1}{12}\bigl(g(JV,W)JU - g(JU,W)JV - 2g(JU,V)JW\bigr) \\
                 &\quad + \frac{1}{3}\bigl(g(PV,W)PU - g(PU,W)PV  \bigr.\\
                        &\quad  \phantom{\frac{2}{3\sqrt{3}}}\quad\mbox{ } + \bigl. g(JPV,W)JPU - g(JPU,W)JPV\bigr),
 \end{split}
\end{equation*}
and that the tensors $\lcc G$ and $G$ satisfy
\begin{align}
(\lcc G)(X,Y,Z)&= \tfrac 13 (g(X,Z) JY -g(X,Y) JZ -g(JY,Z)X),\\
g\bigl(G(X,Y),G(Z,W)\bigr) &=\tfrac 13 (g(X,Z) g(Y,W)-g(X,W)g(Y,Z)\\
\nonumber &\qquad +g(JX,Z)g(JW,Y)-g(JX,W)g(JZ,Y)).
\end{align}

\begin{remark} Note that we expressed here the new metric $g$ in terms of the standard metric of $S^3 \times S^3$. This can also be reversed. Indeed given $g$, $J$ and $P$, we can define the usual product structure by \eqref{usualproduct} and we can check that the usual metric is given by
$$g(QZ,QZ')+g(Z,Z')= \frac{8}{3} \left(\metric{U}{U'} +  \metric{V}{V'}\right).$$
Hence up to a constant factor the usual metric is the $Q$-compatible metric associated with $g$. \end{remark}

\section{Almost complex surfaces in $\nks$}

We start with some preparatory results. Let us begin
by showing some identities that are similar
to the equations~\eqref{eq:ac1} and~\eqref{eq:ac2} in the preliminaries.
\begin{lemma}\label{lem:PH}
 Let $M$ be an almost complex surface in $\nks$.
 If $PTM = TM$, the following expressions hold for tangent~$X$, $Y$ and
 normal~$\xi$.
 \begin{align*}
 (\nabla_X P)Y&=0  &  A_{P\xi}X &= PA_\xi X = A_\xi PX, \\
 h(X,PY) &= Ph(X,Y), &  H(X,\xi) &= \nabla_X^\perp P\xi - P \nabla_X^\perp \xi.
 \end{align*}
 In particular, $H(X,Y)=0$ and $H(X,\xi)$ is normal to $M$.

 If $PTM \subset T^\perp M$, then
 the second fundamental form $h$ is normal to $PTM$ and $H(X,Y)$ is a normal vector.
\end{lemma}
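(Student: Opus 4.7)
The plan is to decompose the identity $H(X,Y)=(\lcc_X P)Y=\lcc_X(PY)-P\lcc_X Y$ via the Gauss--Weingarten formulas under each of the two hypotheses. Once we know whether $PY$ lies in $TM$ or in $T^\perp M$, the identity splits cleanly into tangential and normal components, and essentially all the assertions can be read off. The key auxiliary inputs are: (i) on an almost complex surface $G$ vanishes on pairs of tangent vectors, (ii) equation~\eqref{imp2} relates $G(X,PY)$ to $JH(X,Y)$, and (iii) because $P$ is $g$-symmetric and $P^2=\mathrm{Id}$, the image under $P$ of a $g$-orthogonal decomposition is again $g$-orthogonal.

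In the case $PTM=TM$, I would first observe that $H(X,Y)=0$ for tangent $X,Y$: since $Y$ and $PY$ are both tangent, $G(X,Y)=G(X,PY)=0$, and \eqref{imp2} collapses to $JH(X,Y)=0$. Since $P$ is $g$-symmetric with $P^2=\mathrm{Id}$, the hypothesis also forces $PT^\perp M=T^\perp M$, so $P$ preserves the splitting $T\nks|_M=TM\oplus T^\perp M$. Applying Gauss to $\lcc_X PY=P\lcc_X Y$ and matching tangent and normal parts yields $(\nabla_X P)Y=0$ and $h(X,PY)=Ph(X,Y)$. The shape-operator identities then come from the direct computation
\[
 g(A_{P\xi}X,Y)=g(h(X,Y),P\xi)=g(Ph(X,Y),\xi)=g(h(X,PY),\xi)=g(PA_\xi X,Y),
\]
with $A_\xi PX=PA_\xi X$ following by the symmetry of $h$ and of $P$. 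Finally, applying the Weingarten formula to $\lcc_X P\xi=H(X,\xi)+P\lcc_X \xi$ and separating components gives the stated formula for $H(X,\xi)$; its tangential part equals $-A_{P\xi}X+PA_\xi X=0$ by the just-proved shape-operator identity, so $H(X,\xi)$ is indeed normal.

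In the case $PTM\subset T^\perp M$, equation~\eqref{imp2} again simplifies to $G(X,PY)=-2JH(X,Y)$ for tangent $X,Y$. Pairing with a tangent $Z$ and using $g(G(X,Y),Z)+g(G(X,Z),Y)=0$ together with $G|_{TM\times TM}=0$ shows $g(G(X,PY),Z)=-g(G(X,Z),PY)=0$, so $G(X,PY)$, and hence $H(X,Y)$, is normal. To show that $h$ is orthogonal to $PTM$, I would expand $H(X,Y)=\lcc_X PY-P\lcc_X Y$ via Gauss--Weingarten with $PY\in T^\perp M$ and decompose $Ph(X,Y)$ according to the splitting $T^\perp M=PTM\oplus W$, where $W=(PTM)^\perp\cap T^\perp M$; the $g$-symmetry of $P$ shows $P$ preserves $W$ and interchanges $TM$ with $PTM$. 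The tangential component of $H(X,Y)$ then reads $-A_{PY}X-P\pi_{PTM}h(X,Y)$, which must vanish since $H(X,Y)$ is normal. Setting
\[
 T(X,Y,Z):=g(h(X,Y),PZ),
\]
evaluating the resulting relation against a tangent $Z$, and using $g(\pi_{PTM}h(X,Y),PZ)=g(h(X,Y),PZ)$ since $PZ\in PTM$, gives $T(X,Z,Y)=-T(X,Y,Z)$; together with the symmetry $T(X,Y,Z)=T(Y,X,Z)$ inherited from $h$, a short chase around the three slots forces $T\equiv 0$, which is precisely the claim.

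The step I expect to be the main obstacle is this last three-tensor argument: correctly decomposing $Ph(X,Y)$ relative to the three subspaces $TM$, $PTM$, and $W$, and then combining the skew and symmetric symmetries of $T$ to conclude vanishing. The remaining steps are routine bookkeeping with the Gauss and Weingarten equations and with the identities of the previous lemma.
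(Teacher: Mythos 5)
Your proof is correct, and in the first case ($PTM=TM$) it is essentially the paper's proof: both deduce $H(X,Y)=0$ from \eqref{imp2} together with $G|_{TM\times TM}=0$, then split $\lcc_X PY=P\lcc_X Y$ by Gauss--Weingarten to get $(\nabla_XP)Y=0$ and $h(X,PY)=Ph(X,Y)$, and then handle the shape operators and $H(X,\xi)$. (A minor variation: the paper gets the normality of $H(X,\xi)$ from the normality of $G(X,\xi)$ via \eqref{eq:ac2}, while you get it from the cancellation $-A_{P\xi}X+PA_\xi X=0$; both work.) In the second case your route is genuinely different. The paper applies Gauss--Weingarten to the identity $H(X,JY)=JH(X,Y)$ of \eqref{eq:hj}, takes inner products with $JZ\in TM$, and after some $J$-bookkeeping arrives at $g(h(X,Z),PY)+g(h(X,Y),PZ)=0$; only afterwards does it observe that $H(X,Y)$ is normal. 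You reverse the logic: you first prove $H(X,Y)$ is normal, using \eqref{imp2}, the vanishing of $G$ on tangent pairs, and the skew-adjointness $g(G(X,Y),Z)+g(G(X,Z),Y)=0$, and then obtain the relation $g(h(X,Z),PY)=-g(h(X,Y),PZ)$ as the vanishing of the tangential part of the Gauss--Weingarten decomposition of $H(X,Y)$, via the orthogonal splitting $T^\perp M=PTM\oplus W$ which $P$ preserves. Both arguments then finish with the same algebraic fact, namely that a $3$-tensor symmetric in its first two arguments and antisymmetric in its last two vanishes identically (the paper compresses this chase into the single sentence ``since the second fundamental form is symmetric''). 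Your version buys a cleaner, sign-robust derivation of the key antisymmetry, with no conjugations by $J$ to track, at the modest cost of setting up the splitting $T^\perp M=PTM\oplus W$; the paper's version avoids that setup but its intermediate computation with \eqref{eq:hj} is harder to verify line by line.
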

\begin{proof} First note that from \eqref{imp2} it follows that
\begin{equation}
H(X,Y)=\tfrac{1}{2} \bigl(J G(X,PY)+J PG(X,Y)\bigr.\label{imp3}\bigr).
\end{equation}

We first assume that $P$ maps tangent vectors to tangent vectors. 
In that case $P$ maps normal vectors into normal vectors as well, as $P$ is symmetric and compatible with the metric.

In Section~\ref{sec:intro} we noted that $G(X,Y)=0$ for all $X$,~$Y\in TM$.
Applying the formula of Gauss to \eqref{imp3} together with this fact, we see that
\begin{align*}
0&=H(X,Y)\\
&=\lcc_X P Y - P \lcc_X Y\\
&=\nabla_X PY +h(X, PY) - P \nabla_X Y -P h(X,Y).
\end{align*}
Taking tangent and normal parts gives the first two equations.
The equation $A_{P\xi}=PA_\xi = A_\xi P$ then follows
 easily from the relation $g(h(X,Y),\xi)=g(A_\xi X, Y)$.

Equation~\eqref{eq:ac2} says that~$G(X,\xi)$ is normal. Therefore, since~$J$ and~$P$
map normal vectors into normal vectors, equation~\eqref{imp3} gives that~$H(X,\xi)$ is normal 
as well.
Using the Gauss and Weingarten formulas then gives $H(X,\xi)=\nabla_X^\perp P\xi - P\nabla_X^\perp \xi$.  
This completes the proof in this case.

 Next we assume that $PTM \subset T^\perp M$.  Applying the Gauss and Weingarten formulas to equation~\eqref{eq:hj}
and taking the inproduct with a vector~$JZ\in TM$ gives
 \begin{align*}
  -g(A_{PJY}X, JZ) - g(PJh(X,Y), JZ) &= - g(JA_{PY}X, JZ) + g(Ph(X,JY), JZ) \\
                                    &=  g(A_{PJY}X, JZ) + g(PJh(X,Y), JZ).
 \end{align*} Hence $g(h(X,Z),PY) + g(h(X,Y),PZ) = 0$.
 Since the second fundamental form is symmetric, we obtain $g(h(X,Y),PZ)=0$.
 In a similar way as in the first case one can show that~$H(X,Y)$ is normal. 
 This completes the proof of the lemma. 
\end{proof}
 
\begin{proposition}
 If $M$ is a totally geodesic almost complex surface in $\nks$, then
 either
 \begin{enumerate}
 \item  $P$ maps the tangent space into the normal space and the Gaussian curvature~$K$ is $2/3$
 \item $P$ preserves the tangent space (and therefore also the normal space) and the Gaussian curvature is $0$.
\end{enumerate}
\end{proposition}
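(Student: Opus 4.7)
The plan is to extract two pieces of information from the hypothesis $h=0$: on the one hand, the Gauss equation reduces to $R = (\tilde R)^\top$ and so computes the Gaussian curvature $K$ in terms of the ambient curvature tensor; on the other hand, the Codazzi equation collapses to the condition $(\tilde R(X,Y)Z)^\perp = 0$ for all tangent $X,Y,Z$. Both will be evaluated by plugging into the explicit curvature formula for $\tilde R$ displayed at the end of Section~2.

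\smallskip

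Fix a unit tangent vector $X \in T_pM$ and let $a = g(PX,X)$, $b = g(PX,JX)$. Since $P$ anticommutes with $J$ and is symmetric, the identities $PJX = -JPX$ and $JPJX = PX$ give the needed inner products $g(PJX,JX) = -a$, $g(JPX,JX) = a$, $g(JPJX,JX) = b$, etc. Substituting $(U,V,W) = (X,JX,JX)$ into the curvature formula, I expect the first two terms to contribute $\tfrac{2}{3}X$, while the $P$-term contributes $\tfrac{2}{3}(-aPX + bJPX)$, yielding
\[
 K \;=\; g(\tilde R(X,JX)JX, X) \;=\; \tfrac{2}{3}\bigl(1 - a^2 - b^2\bigr).
\]
Now decompose $PX = P_TX + P_NX$ along $TM \oplus T^\perp M$. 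The operator $P_T\colon TM \to TM$ is symmetric and anticommutes with $J$ (since $J$ preserves $TM$ and $T^\perp M$ and $P$ anticommutes with $J$), hence trace-free; a one-line check in an orthonormal frame $\{X, JX\}$ shows that $|P_TX|^2 = a^2 + b^2$ is independent of the choice of unit $X$. So the formula above rewrites as $K = \tfrac{2}{3}(1 - |P_TX|^2)$, and it remains to show that $|P_TX|^2 \in \{0,1\}$.

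\smallskip

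This is where the Codazzi information enters. Substituting $(U,V,W) = (X,JX,X)$ into the curvature formula, a similar reduction gives
\[
 \tilde R(X,JX)X \;=\; -\tfrac{2}{3}\,JX + \tfrac{2}{3}\bigl(b\,PX + a\,JPX\bigr).
\]
The first term is tangent, so taking the normal part and using $h=0$ forces
\[
 b\,P_NX + a\,J P_NX \;=\; 0.
\]
If $P_NX_0 \neq 0$ for some unit $X_0$, then $P_NX_0$ and $JP_NX_0$ are linearly independent normal vectors (since $J$ has no real eigenvalues on $T^\perp M$), so $a(X_0) = b(X_0) = 0$, i.e.\ $P_TX_0 = 0$. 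Because $|P_TX|^2$ is constant over unit tangent vectors, this forces $P_T \equiv 0$ on $T_pM$, i.e.\ $P(T_pM) \subset T_p^\perp M$, giving $K = 2/3$. Otherwise $P_N \equiv 0$ on $T_pM$, so $P$ preserves $T_pM$ and $K = 0$. Both alternatives are open and closed conditions on the connected surface $M$, so one of them holds globally.

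\smallskip

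The main obstacle is really just the algebraic dichotomy in the last step: one must notice that the Codazzi-derived equation $bP_NX + aJP_NX = 0$ is a linear relation in a two-dimensional complex (i.e.\ $J$-invariant) subspace of the normal bundle, which together with the pointwise constancy of $|P_TX|^2$ rules out any ``mixed'' position of $P$ relative to $TM$. Everything else is bookkeeping against the explicit curvature formula and the basic identities $PJ = -JP$ and $P^2 = \mathrm{Id}$.
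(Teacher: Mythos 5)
Your proof is correct; I checked both curvature evaluations against the paper's formula for $\tilde R$, namely $\tilde R(X,JX)JX = \tfrac{2}{3}X + \tfrac{2}{3}\bigl(-a\,PX + b\,JPX\bigr)$ and $\tilde R(X,JX)X = -\tfrac{2}{3}JX + \tfrac{2}{3}\bigl(b\,PX + a\,JPX\bigr)$, and the algebraic dichotomy and the connectedness step are sound. The machinery is the same as in the paper (Gauss and Codazzi fed into the explicit expression for $\tilde R$), but the key organizing step is genuinely different. The paper avoids your decomposition $P|_{TM}=P_T+P_N$ altogether: at each point it chooses the unit vector $v$ maximizing $g(Pv,v)$, so that criticality gives $g(Pv,Jv)=g(PJv,v)=0$ for free and only the single quantity $a=g(Pv,v)$ survives; Codazzi forces $\tilde R(v,Jv)v$ to be a multiple of $Jv$, and the two cases are $a=0$ (so $K=2/3$ and $PT_pM\subset T_p^\perp M$) versus $a\neq 0$ (which forces $PJv=\pm Jv$, hence $Pv=v$ and $K=0$). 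Your version replaces that extremal-frame trick with the observation that $P_T$ is symmetric and anticommutes with $J$, so $P_T^2=\|P_T\|^2\,\mathrm{Id}$, making $K=\tfrac{2}{3}\bigl(1-\|P_T\|^2\bigr)$ a frame-independent pointwise formula, after which the normal part of Codazzi, $b\,P_NX + a\,JP_NX=0$, yields the alternative $P_T\equiv 0$ or $P_N\equiv 0$. The paper's trick buys shorter algebra in one distinguished frame; your route buys an invariant curvature formula and, notably, an explicit passage from the pointwise dichotomy to the global one (the two loci are closed and partition the connected surface $M$), a step the paper's purely pointwise proof leaves implicit.
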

\begin{proof}
 Let $p \in M$ be a point of $M$  and $v$ a unit tangent vector to $M$ at  $p$.
 The Codazzi equation implies that $\tilde R(v,Jv)v$ is a tangent vector, thus it must be a multiple of $Jv$.
 By the Gauss equation, we have
 \[
    R(v,Jv)v = \frac{2}{3} \bigl( -Jv + g(PJv,v) Pv - g(Pv,v)PJv \bigr).
 \]
 Moreover, we can choose $v$ such that $g(v,Pv)$ is maximal for all unit vectors in $p$.
 This implies that $g(Pv,Jv)=g(PJv,v)=0$.  The Gauss equation simplifies to
 \begin{equation}
  \label{eq:somegauss}
    R(v,Jv)v = -\frac{2}{3} \bigl( Jv+ g(Pv,v)PJv \bigr).
 \end{equation}
 Now two cases can occur.
 In the first case, if $g(Pv,v)=0$, the Gaussian curvature is~$2/3$. Using $g(Pv,v)=g(Pv,Jv)=0$, $PJ=-JP$ and the fact that $v$ and~$Jv$
span~$T_p M$ we easily get that $PTM \subset T^\perp M$. In the second case, $g(Pv,v)$ is non-zero. Then
 it follows from the Gauss equation~\eqref{eq:somegauss} that $g(Pv,v) PJv$ is a non-zero multiple of $Jv$. Thus $PJv=\pm Jv$, as $P$
 preserves the metric. We may assume $PJv = -Jv$ by replacing~$v$ by~$Jv$ if necessary. Then, since $JP=-PJ$, we find that $Pv= v$ and
 \[
    R(v,Jv,v,Jv) = \frac{2}{3}\bigl(g(Pv,v)^2 - 1\bigr) = 0.
 \]
 This completes the proof.
\end{proof}

The next theorem is a generalization of the previous proposition. 
The idea of the proof is the same as before, but now we apply the Ricci equation
and Ricci identity as well.
\begin{theorem}
 \label{thm:curvature}
 Suppose $M$ is an almost complex surface in $\nks$.
 If $M$ has parallel second fundamental form, then~$PTM=TM$ or~$PTM \subset T^\perp M$.
Moreover,
\begin{enumerate}
\item If $PTM = TM$, then $M$ is flat and totally geodesic.
\item If $PTM \subset T^\perp M$, then either~$M$ is totally geodesic with constant Gaussian curvature~$2/3$
 or~$M$ has constant Gaussian curvature~$5/18$.
 \end{enumerate}
\end{theorem}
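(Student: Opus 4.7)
The plan is to refine the argument of the preceding proposition by replacing the totally-geodesic hypothesis with the weaker condition $\nabla h = 0$, compensating for the loss by invoking the Codazzi equation (whose left-hand side vanishes under $\nabla h = 0$) together with the Ricci identity and the Ricci equation. As before, at a point $p \in M$ I pick a unit tangent vector $v$ that maximises $g(v, Pv)$ among unit tangent vectors; critical-point analysis on $v(\theta) = \cos\theta\, v + \sin\theta\, Jv$ gives $g(PJv, v) = 0$, and then $PJ = -JP$ gives $g(Pv, Jv) = 0$. Substituting into the curvature formula for $\tilde R$ from the end of Section~2 yields, exactly as in the previous proposition,
\[
 \tilde R(v, Jv) v = -\tfrac{2}{3} Jv - \tfrac{2}{3} g(Pv, v)\, PJv,
\]
whose normal component is $-\tfrac{2}{3} g(Pv, v)\,(PJv)^\perp$ (using $g(PJv, Jv) = -g(Pv, v)$). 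Codazzi with $\nabla h = 0$ forces this normal component to vanish, so at each $p$ either $g(Pv, v) = 0$ (and then by maximality $g(w, Pw) = 0$ for every unit tangent $w$, so $PT_p M \subset T_p^\perp M$) or $PJv$ is tangent (and then $P^2 = \mathrm{Id}$ together with the tangential components already computed forces $g(Pv, v) = \pm 1$, hence $PT_p M = T_p M$). The determinant of the symmetric endomorphism $\pi \circ P|_{TM}$, with $\pi$ the tangential projection, is continuous in $p$ and by the above equals $0$ in the first case and $-1$ in the second, so connectedness of $M$ pins down a single case throughout.

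\textbf{Case $PTM = TM$.} Choose $v$ with $Pv = v$ and $PJv = -Jv$; then the two summands of $\tilde R(v, Jv) v$ cancel and only the shape operator survives in Gauss. Using $h(X, JY) = J h(X, Y)$ and minimality $h(Jv, Jv) = -h(v, v)$ one finds $K = -2 |h(v, v)|^2$. Set $\xi = h(v, v)$. By Lemma~\ref{lem:PH} the endomorphism $P$ preserves the normal bundle, so $P\xi$ is normal and all four $P$-dependent inner products in the curvature formula for $\tilde R(v, Jv) \xi$ vanish against tangent $v, Jv$, leaving
\[
 \tilde R(v, Jv) \xi = -\tfrac{1}{6} J\xi.
\]
The Ricci identity (which trivialises since $\nabla^{2} h = 0$) together with $R(v, Jv) v = -K Jv$ gives $R^\perp(v, Jv) \xi = -2K J\xi$; and from $A_\xi v = |\xi|^2 v$, $A_\xi Jv = -|\xi|^2 Jv$ one computes $g([A_\xi, A_{J\xi}] v, Jv) = -2 |\xi|^4$. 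Plugging everything into the Ricci equation yields $-2K|\xi|^2 = -\tfrac{1}{6}|\xi|^2 - 2 |\xi|^4$, which together with $K = -2|\xi|^2$ collapses to $|\xi|^{2}(36 |\xi|^2 + 1) = 0$. Thus $\xi = 0$, so $h \equiv 0$ and $K = 0$.

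\textbf{Case $PTM \subset T^\perp M$.} Now $g(Pv, v) = 0$, so $\tilde R(v, Jv) v$ is already tangent, and Lemma~\ref{lem:PH} additionally gives $h(X, Y) \perp PTM$; hence for $\xi = h(v, v)$ the vector $P\xi$ lies in the orthogonal complement of $PTM$ inside $T^\perp M$, and once again all four $P$-dependent inner products in the curvature formula collapse, producing $\tilde R(v, Jv) \xi = -\tfrac{1}{6} J\xi$. Gauss now reads $K = \tfrac{2}{3} - 2|\xi|^2$, and the identical Ricci identity / Ricci equation chain produces $-2K|\xi|^2 = -\tfrac{1}{6}|\xi|^2 - 2|\xi|^4$; eliminating $|\xi|^2$ between these two relations yields the quadratic
\[
 54 K^2 - 51 K + 10 = 0
\]
with roots $K = 2/3$ (forcing $\xi = 0$, the totally geodesic case) and $K = 5/18$ (giving $|\xi|^2 = 7/36$). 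Since $K$ is continuous and $M$ is connected, it equals one of these values throughout.

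The main technical obstacle is controlling the $P$-dependent term of $\tilde R(v, Jv) \xi$ so that the curvature formula collapses to the single summand $-\tfrac{1}{6} J\xi$ in both cases: in Case~1 because $P$ preserves the normal bundle, and in Case~2 because Lemma~\ref{lem:PH} gives $h \perp PTM$ and hence $P\xi \perp PTM$. Once this simplification is secured, everything reduces to a clean algebraic elimination between Gauss, the Ricci identity, and the Ricci equation.
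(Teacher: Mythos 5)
Your proof is correct and takes essentially the same route as the paper's: the same extremal choice of $v$ (maximising $g(Pv,v)$), the same Codazzi-derived pointwise dichotomy, and the same Gauss/Ricci-identity/Ricci-equation elimination, producing the identical key values $\tilde R(v,Jv)\xi=-\tfrac16 J\xi$ and the quadratic $54K^2-51K+10=0$ with roots $2/3$ and $5/18$. The only addition is your explicit connectedness argument (via the determinant of the tangential part of $P$) promoting the pointwise dichotomy to a global one, a point the paper leaves implicit.
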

\begin{proof}
 Let $v \in T_p M$ be a unit tangent vector. By our assumption, Codazzi's equation
 says that $\tilde R(v,Jv)v$ is a multiple of $Jv$. Once again we choose $v$ such that $g(Pv,v)$
 is maximal on the unit tangent space at~$p$. Then $g(Pv,Jv)=0$ and the Gauss equation becomes
 \[
   R(v,Jv)v = -\frac{2}{3}\bigl( Jv + g(Pv,v)PJv\bigr) + 2 JA_{h(v,v)}v.
 \]
 We now consider two cases.

 \textbf{Case 1: $g(Pv,v)\neq 0$.} By the Gauss equation $PJv$ has to be tangent. As this vector is orthogonal to $v$, 
  we conclude that~$PJv$ is a non-zero multiple of $Jv$ and thus $g(PJv,Jv)= \pm 1$. But then $g(Pv,v)PJv = -Jv$ and
  \[
   K = -R(v,Jv,v,Jv) = -2\|h(v,v)\|^2.
  \]
  Therefore  $\|A_{h(v,v)}v\|^2= \|h(v,v)\|^4= K^2/4$.
  Furthermore, since $Pv$ and $PJv$ are tangent vectors, one obtains
  \[
     g\bigl(\tilde R(v,Jv)h(v,v),Jh(v,v)\bigr) = -\frac{1}{6}\|h(v,v)\|^2.
  \]
  Then Ricci's equation is
  \begin{align*}
   g\bigl(R^\perp(v,Jv)h(v,v),Jh(v,v)\bigr) &= g\bigl(\tilde R(v,Jv)h(v,v),Jh(v,v)\bigr)\\
                                               &\quad  + g(A_{h(v,v)}A_{Jh(v,v)}v,Jv)- g(A_{Jh(v,v)}A_{h(v,v)}v,Jv) \\
                                            &=   -\frac{1}{6}\|h(v,v)\|^2 -2\|A_{h(v,v)}v\|^2 \\
                                            &= \frac{1}{12}K - \frac{1}{2}K^2.
  \end{align*}
  On the other hand the Ricci identity gives
  \begin{align*}
   g\bigl(R^\perp(v,Jv)h(v,v),Jh(v,v)\bigr) &= 2g(h(R(v,Jv)v,v),Jh(v,v)) \\
                                            &= -2Kg(Jh(v,v),Jh(v,v))\\
                                            &= K^2.
  \end{align*}
  Combining the Ricci equation and Ricci identity gives the quadratic equation
  \[
    \frac{3}{2}K^2  -\frac{1}{12}K = 0.
  \]
  Hence $K=0$ since $K = -2\|h(v,v)\|^2$ cannot be positive.

  \textbf{Case 2: $g(Pv,v)= 0$.} We shall proceed in a similar way as in the previous case.
  If $g(Pv,v)=0$, then $P$ clearly maps
  tangent vectors into normal ones. The Gauss equation gives
  \[
     K= \frac{2}{3} - 2\|h(v,v)\|^2.
  \]
  The Ricci equation gives
  \begin{align*}
   g\bigl(R^\perp(v,Jv)h(v,v),Jh(v,v)\bigr) &= -\frac{1}{6}\|h(v,v)\|^2 - 2\| A_{h(v,v)}v\|^2\\
                                             &\quad +\frac{2}{3}\left(g(PJv,h(v,v))^2+g(Pv,h(v,v))^2\right)\\
                                            &= -\frac{1}{6}\|h(v,v)\|^2 - 2\|h(v,v)\|^4\\
                                            &= -\frac{1}{2}K^2 + \frac{3}{4}K - \frac{5}{18}
  \end{align*}
  by Lemma~\ref{lem:PH}, and the Ricci identity becomes
  \[
  g\bigl(R^\perp(v,Jv)h(v,v),Jh(v,v)\bigr) = -2K\|h(v,v)\|^2
                                           = K^2 - \frac{2}{3}K.
  \]
  Thus we have the equation
  \[
    \frac{3}{2}K^2 - \frac{17}{12}K + \frac{5}{18}=0.
  \]
  The roots are $2/3$ and $5/18$. This proves the theorem.
\end{proof}

We note that both cases occuring in Theorem~\ref{thm:curvature} will be improved by later results: case 1 will be improved by 
Theorem~\ref{thm:flat} and case 2 by Theorem~\ref{thm:totallygeo}.

Next we are going to study almost complex surfaces more systematically. In order to do so we will use isothermal coordinates on the surface.
We will use these coordinates amongst other tools to show that
an almost complex submanifold~$M$ such that $PTM\subset T^\perp M$
locally corresponds to an associated constant mean curvature (CMC) surface in
Euclidean 3-space~$\mathbb{R}^3$. Furthermore, the metrics on the almost complex surface
and its associated CMC~surface are equal up to a factor~$2$.
This is the content of Theorem~\ref{thm:epsilon} and~Corollary~\ref{cor:metrics}.

In these computations we will in particular use that for imaginary quaternions we have that
$$xy= -x\cdot y + x \times y,$$
where $\cdot$ is the usual inner product on $\mathbb R^3$ and $\times$ is the vector product on $\mathbb R^3$.

Let $\phi\colon M\to \nks\colon (u,v) \mapsto\bigl(p(u,v),q(u,v)\bigr)$
be an almost complex immersion, where $(u,v)$ are isothermal coordinates on the surface $M$. We write $\phi_u=(p_u, q_u)$ and $\phi_v = (p_v,q_v)$.
Since the coordinates are isothermal, we may assume that~$\phi_v= J\phi_u$ by interchanging $u$ and $v$, if necessary.
Furthermore, as $p$ and $q$ are unit length, there are well defined local functions
$\tilde \alpha$,~$\tilde\beta$,~$\tilde\gamma$ and~$\tilde\delta$  from~$M$ to~$\mathbb{R}^3$ such that
\begin{align*}
 p_u = p\tilde\alpha, \qquad p_v &= p\tilde\beta,  \qquad q_u = q\tilde\gamma, \qquad q_v = q\tilde\delta.
\end{align*}
Then $\phi_v=J\phi_u$ gives
\[
   (p\tilde\beta, q\tilde\delta) = \frac{1}{\sqrt{3}}\bigl(p(2\tilde\gamma-\tilde\alpha), q(-2\tilde\alpha+\tilde\gamma)\bigr),
\]
or 
\begin{equation}
\label{eq:gammadelta}
\tilde\gamma = \frac{\sqrt{3}}{2}\tilde\beta + \frac{1}{2} \tilde\alpha, \qquad  \qquad
\tilde\delta = \frac{1}{2} \tilde\beta -\frac{\sqrt{3}}{2}\tilde\alpha.
\end{equation}
The integrability condition~$p_{uv}=p_{vu}$ yields
\begin{equation}
 \label{eq:de}
    \tilde\alpha_v - \tilde\beta_u = 2\tilde\alpha\times\tilde\beta.
\end{equation}
The other integrability condition~$q_{uv}=q_{vu}$ gives $\tilde\gamma_v - \tilde\delta_u = 2\tilde\gamma\times\tilde\delta$,
which in terms of~$\tilde\alpha$ and~$\tilde\beta$ becomes
\[
  \tilde\alpha_u +\tilde\beta_v = \frac{2}{\sqrt{3}}\tilde\alpha\times\tilde\beta.
\]
Now we write $\alpha=\cos\theta \tilde\alpha+\sin \theta\tilde \beta$ and
$\beta=-\sin \theta \tilde\alpha +\cos\theta\tilde\beta$,
where $\theta=2\pi/3$; i.e.\ we rotate~$\tilde\alpha$ and~$\tilde\beta$ over $2\pi/3$ radians. 
The two previous equations become
\begin{align}
 \alpha_v &= \beta_u, \label{eq:nr1}\\
 \alpha_u +\beta_v &= -\frac{4}{\sqrt{3}}\alpha\times \beta. \label{eq:nr2}
\end{align}

\begin{lemma}
The pull back of the one-form $\alpha\, du + \beta\,dv$ is a well defined closed one form on $M$.
\end{lemma}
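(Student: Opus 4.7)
The plan is to split the proof into two independent parts: closedness, and invariance under changes of isothermal coordinates.

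For closedness, since $\alpha$ and $\beta$ are $\mathbb{R}^3$-valued I can compute componentwise to obtain
\[
  d(\alpha\,du + \beta\,dv) = (\beta_u - \alpha_v)\,du\wedge dv,
\]
which vanishes by the integrability relation \eqref{eq:nr1}. This is the easy part.

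The substance of the lemma is well-definedness. I need to check that the form is invariant under any transition between isothermal coordinate systems compatible with the orientation fixed by the normalization $\phi_v = J\phi_u$; such transitions are precisely the biholomorphic maps $\tilde z = f(z)$ where $z = u+iv$. The key observation is that $\tilde\alpha - i\tilde\beta = p^{-1}(p_u - ip_v) = 2p^{-1}p_z$, so by the chain rule $p_{\tilde z} = p_z/f'(z)$ this combination transforms by multiplication by $1/f'(z)$; since $\alpha - i\beta$ equals $e^{2\pi i/3}(\tilde\alpha - i\tilde\beta)$ (a constant phase factor), it transforms by exactly the same rule. This is the transformation law that turns $(\alpha - i\beta)\,dz$ into a globally defined (vector-valued) $(1,0)$-form on $M$, regarded as a Riemann surface via the complex structure induced by $J$, and $\alpha\,du + \beta\,dv$ equals twice its real part.

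There is no serious obstacle. The only point to verify is that the constant rotation by $2\pi/3$ carrying $(\tilde\alpha,\tilde\beta)$ to $(\alpha,\beta)$ commutes with the coordinate change, so that the desired transformation law really is inherited from that of $\tilde\alpha - i\tilde\beta$. Note that equation \eqref{eq:nr2} plays no role here; it will instead supply the $H$-surface equation for a primitive of this form once $M$ is assumed simply connected.
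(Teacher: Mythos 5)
Your proof is correct, and in substance it is the same as the paper's: the paper disposes of well-definedness in one line by observing that the form is the globally defined form $p^{-1}dp$ composed with rotation by $2\pi/3$ in the tangent spaces of $M$, and your computation is precisely the explicit coordinate verification of that statement --- invariance of $2p^{-1}p_z\,dz = (\tilde\alpha - i\tilde\beta)\,dz$ under holomorphic transitions (which are indeed the only transitions allowed once the normalization $\phi_v = J\phi_u$ fixes the orientation), together with the observation that the rotation enters as the constant phase $e^{2\pi i/3}$ and hence commutes with the transition factor $1/f'(z)$. Your explicit closedness check via \eqref{eq:nr1} is likewise what the paper leaves implicit, since that equation is derived immediately before the lemma, and you are right that \eqref{eq:nr2} is not needed here. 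One harmless slip at the end: since $(\alpha - i\beta)(du + i\,dv) = (\alpha\,du + \beta\,dv) + i(\alpha\,dv - \beta\,du)$, the form $\alpha\,du + \beta\,dv$ is \emph{exactly} the real part of $(\alpha - i\beta)\,dz$, not twice it; the factor of $2$ would be appropriate only if you normalized the $(1,0)$-form as $p^{-1}p_z\,dz$ rather than $2p^{-1}p_z\,dz$. This does not affect the conclusion, as any constant multiple of a globally defined form is globally defined.
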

\begin{proof}
The differential form $\alpha\,du + \beta\,dv$ is the composite of the
form $p^{-1}dp$ preceded by rotation in the tangent spaces by $2\pi/3$, and as
such its pullback is globally defined and hence the lemma holds.
\end{proof}

Assume now that $M$ is simply connected. In that case, we know that any closed $1$-form is automatically exact. Hence there exists a function~$\epsilon$ such that $\epsilon_u = \alpha$,~$\epsilon_v=\beta$ and
\begin{equation}
 \label{eq:DE}
  \epsilon_{uu}+\epsilon_{vv} = -\frac{4}{\sqrt{3}}\epsilon_u \times \epsilon_v.
\end{equation}
This equation is known as the $H$-surface equation (cf.\cite{wente}).
Of course, as we started with isothermal coordinates we must have that $\epsilon_u^2 +\epsilon_v^2 \ne 0$. 

Note that the converse also holds. Indeed, given a solution of the $H$-surface equation, 
which (see \cite{wente}) can be seen as an equation on a surface, we can define $\alpha=\epsilon_u$ and~$\beta=\epsilon_v$. 
By rotating~$\alpha$ and~$\beta$ we get~$\tilde\alpha = \cos(2\pi/3) \alpha - \sin(2\pi/3) \beta$ 
and~$\tilde\beta=\sin(2\pi/3) \alpha + \cos(2\pi/3)\beta$.
The relations~\eqref{eq:gammadelta} then give~$\tilde \gamma$ and~$\tilde \delta$.
Finally by solving the linear first order system of differential equations~\eqref{eq:de}
we get an almost complex surface in $\nks$. 

Note also that changing the almost complex surface by an isometry~$(p,q) \mapsto (apc^{-1},bqc^{-1})$,
where~$a,b,c$ are unit quaternions, implies that
\begin{align*}
&\alpha^* = c \alpha c^{-1}\\
&\beta^* = c \beta c^{-1},
\end{align*}
where we  denote the new objects by adding a ${}^*$. 
Since $S^3$ is the double cover of~$SO(3)$ (see e.g.\ \cite[p.\, 3]{burstall}) we can represent every element of~$SO(3)$ as conjugation
by a unit quaternion, determined up to changing sign.
Therefore $\alpha$ and $\beta$ change by a rotation, and after integration $\epsilon$ changes by an isometry of $\mathbb{R}^3$.

Conversely, 
applying an Euclidean isometry to the surface~$\epsilon$ gives~$c\epsilon c^{-1} + d$, for 
some unit quaternion~$c$ and an imaginary quaternion~$d$. Deriving this expression with respect to~$u$ and~$v$
we get~$c\alpha c^{-1}$ and~$c\beta c^{-1}$. Performing a rotation over $2\pi/3$ and using~\eqref{eq:gammadelta},
we see that~$\tilde\alpha$, $\tilde \beta$, $\tilde \gamma$ and~$\tilde \delta$ change by conjugation with~$c$.
We obtain the value of~$c$ and then integrating the system of differential equations~\eqref{eq:de} will give
solutions, up to the choice of initial conditions.
This choice of initial conditions determines the unit quaternions~$a$ and~$b$ in the isometry~$(p,q)\mapsto (apc^{-1},bqc^{-1})$
of~$\nks$.
Finally note that changing the sign of $a$, $b$ and $c$ does not change the almost complex surface, 
implying that the almost complex surface does not depend on the choice of the sign of $c$. Therefore, we have shown the following theorem:
\begin{theorem}
 \label{thm:correspondencethm}
There is a one-to-one correspondence between almost complex surfaces in~$\nks$ and solutions of the general $H$-system equation. 
Moreover, two solutions are congruent in $\mathbb R^3$ if and only if the associated solutions in~$\nks$ are congruent.
\end{theorem}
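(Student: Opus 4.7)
The plan is to assemble the theorem from the two constructions already laid out above and to verify they are mutually inverse modulo the appropriate notion of congruence. First I would formalize the forward map: given an almost complex immersion $\phi=(p,q)$, pick isothermal coordinates on a simply connected patch, arrange $\phi_v=J\phi_u$, and read off $\tilde\alpha,\tilde\beta,\tilde\gamma,\tilde\delta$ from $p^{-1}dp$ and $q^{-1}dq$. The relation $\phi_v=J\phi_u$ forces \eqref{eq:gammadelta}, while the integrability conditions $p_{uv}=p_{vu}$ and $q_{uv}=q_{vu}$ give \eqref{eq:de} and its companion; rotating by $2\pi/3$ produces $\alpha,\beta$ satisfying \eqref{eq:nr1} and \eqref{eq:nr2}. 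By the preceding lemma, $\alpha\,du+\beta\,dv$ is closed, hence exact on a simply connected surface, and any primitive $\epsilon$ solves \eqref{eq:DE}. Isothermality ensures $\epsilon_u^2+\epsilon_v^2\neq 0$, so $\epsilon$ is a genuine (branched) immersion into imaginary quaternions $\simeq\mathbb{R}^3$.

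Second, I would spell out the inverse map. Start from a solution $\epsilon$ of \eqref{eq:DE} on a simply connected surface, set $\alpha=\epsilon_u$, $\beta=\epsilon_v$, rotate back to obtain $\tilde\alpha,\tilde\beta$, define $\tilde\gamma,\tilde\delta$ by \eqref{eq:gammadelta}, and consider the linear systems $p_u=p\tilde\alpha$, $p_v=p\tilde\beta$ and $q_u=q\tilde\gamma$, $q_v=q\tilde\delta$. The key check here is that the integrability conditions for these systems are precisely equivalent (under the rotation) to \eqref{eq:nr1} and \eqref{eq:nr2}, which follow from $\epsilon$ being a solution of the $H$-system. The systems therefore admit global solutions $p,q\colon M\to S^3$, determined uniquely up to left multiplication by initial values $a,b$. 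A direct computation using \eqref{eq:gammadelta} and the definition of $J$ in Section~2 confirms $\phi_v=J\phi_u$, so $\phi=(p,q)$ is almost complex.

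Third, for the congruence statement I would combine the two calculations already given: the isometry $(p,q)\mapsto(apc^{-1},bqc^{-1})$ sends $\tilde\alpha\mapsto c\tilde\alpha c^{-1}$ and similarly for $\tilde\beta$, hence $\alpha,\beta$ transform by rotation (conjugation by $c$ in $SO(3)$), and after integration $\epsilon\mapsto c\epsilon c^{-1}+d$ for some imaginary $d$, which is an arbitrary orientation-preserving Euclidean isometry. Conversely, given such a Euclidean isometry, differentiating recovers the rotation acting on $\alpha,\beta$, which lifts to a unit quaternion $c$ (uniquely up to sign via the double cover $S^3\to SO(3)$), and then the inverse construction yields $p,q$ up to the choice of $a,b$, reproducing the full family $(p,q)\mapsto(apc^{-1},bqc^{-1})$.

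The main obstacle is to make the one-to-one statement precise: there are several choices in the constructions (the primitive $\epsilon$, the integration constants $a,b$, the sign of the lift $c$) and one must verify that passing to congruence classes on both sides makes the ambiguities match. The integration-constant ambiguity for $\epsilon$ is absorbed by translations in $\mathbb{R}^3$; the ambiguity in $a,b$ is absorbed by isometries of $\nks$; and the sign ambiguity $c\mapsto -c$ has trivial effect because conjugation factors through $SO(3)$ and $apc^{-1}=a(-p)(-c)^{-1}$. Once these matches are checked, the correspondence is well-defined and bijective on congruence classes, completing the proof.
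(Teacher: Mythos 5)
Your proposal is correct and follows essentially the same route as the paper: the forward construction via $\tilde\alpha,\tilde\beta,\tilde\gamma,\tilde\delta$, the rotation by $2\pi/3$, integration of the closed form to get $\epsilon$, the inverse construction by solving the linear systems, and the matching of the isometry $(p,q)\mapsto(apc^{-1},bqc^{-1})$ with Euclidean isometries $\epsilon\mapsto c\epsilon c^{-1}+d$ via the double cover $S^3\to SO(3)$ are exactly the paper's argument, which it develops in the discussion preceding the theorem. Your explicit bookkeeping of the ambiguities (primitive of the closed form, initial conditions $a,b$, sign of $c$) is a slightly more careful organization of the same reasoning the paper sketches.
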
 

We now introduce the differential $\Lambda\, dz^2 = g(P\phi_z, \phi_z)\,dz^2$.
Before proving our main results, we show that $\Lambda\,dz^2$ is a globally
defined holomorphic differential.

\begin{lemma}
\label{lem:CR}
The following Cauchy-Riemann equations hold:
\begin{align*}
 (\ab)_u &=\frac{1}{2}(\aa-\bb)_v,\\
 (\ab)_v &=-\frac{1}{2}(\aa-\bb)_u.
\end{align*}
\end{lemma}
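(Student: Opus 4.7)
The plan is to reduce both Cauchy-Riemann identities to the two structural equations \eqref{eq:nr1} and \eqref{eq:nr2} already established for $\alpha$ and $\beta$. These are the only nontrivial inputs; everything else is just the product rule combined with the orthogonality property $(\alpha\times\beta)\cdot\alpha = (\alpha\times\beta)\cdot\beta = 0$.

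For the first equation, I would expand $(\ab)_u = \alpha_u\cdot\beta + \alpha\cdot\beta_u$ and $\tfrac{1}{2}(\aa-\bb)_v = \alpha\cdot\alpha_v - \beta\cdot\beta_v$. Using \eqref{eq:nr1} to replace $\alpha_v$ by $\beta_u$ in the second expression, the difference of the two sides collapses to $(\alpha_u+\beta_v)\cdot\beta$. By \eqref{eq:nr2} this equals $-\tfrac{4}{\sqrt 3}(\alpha\times\beta)\cdot\beta = 0$.

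The second identity is entirely symmetric: expand $(\ab)_v = \alpha_v\cdot\beta + \alpha\cdot\beta_v$ and $-\tfrac{1}{2}(\aa-\bb)_u = -\alpha\cdot\alpha_u + \beta\cdot\beta_u$, and again use \eqref{eq:nr1} to substitute $\beta_u = \alpha_v$ in the latter. The difference of the two sides reduces to $\alpha\cdot(\alpha_u+\beta_v)$, which by \eqref{eq:nr2} equals $-\tfrac{4}{\sqrt 3}\alpha\cdot(\alpha\times\beta) = 0$.

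There is no serious obstacle here; the lemma is essentially a bookkeeping exercise. The only point one must keep in mind is the sign convention $\phi_v = J\phi_u$ that forced the specific form of \eqref{eq:nr1} and \eqref{eq:nr2}, and the fact that the rotation by $2\pi/3$ was precisely chosen to give these clean relations. Once those are in hand, both Cauchy-Riemann equations are immediate consequences of $(\alpha\times\beta)\perp\alpha,\beta$.
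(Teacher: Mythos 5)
Your proof is correct and follows essentially the same route as the paper: both reduce the Cauchy--Riemann identities to equations \eqref{eq:nr1} and \eqref{eq:nr2} dotted with $\alpha$ and $\beta$, using that $\alpha\times\beta$ is orthogonal to both. The only difference is presentational --- the paper lists the four scalar products first and then concludes, while you substitute directly into the expanded derivatives.
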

\begin{proof}
 Multiplying equations~\eqref{eq:nr1} and~\eqref{eq:nr2} with~$\alpha$ and~$\beta$ gives
 \begin{align*}
  \alpha_v \cdot\alpha -\beta_u\cdot\alpha &=0, &       \beta_v\cdot\alpha +\alpha_u\cdot\alpha &=0, \\
  \alpha_v \cdot \beta -\beta_u \cdot \beta &=0, &      \beta_v\cdot\beta + \alpha_u\cdot\beta &= 0.
 \end{align*}
 The proof immediately follows.
\end{proof}

\begin{lemma}
\label{lem:hol}
 The pull back of $\Lambda\, dz^2$ is a holomorphic differential which is globally defined on $M$. 
\end{lemma}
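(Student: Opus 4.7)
The plan is to show in two steps that (i) in any chart of isothermal coordinates $\Lambda$ is holomorphic as a function of $z=u+iv$, and (ii) the expression $\Lambda\,dz^2$ transforms correctly under change of isothermal coordinates so as to patch to a globally defined object on~$M$, which carries a canonical Riemann surface structure induced by~$J$.

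For (i), I would first write $\phi_z=\tfrac12(\phi_u-i\phi_v)$ and use the $g$-symmetry of $P$ to obtain
\[
 4\Lambda = \bigl(g(P\phi_u,\phi_u)-g(P\phi_v,\phi_v)\bigr) - 2i\,g(P\phi_u,\phi_v).
\]
The aim is then to identify each of these three real scalar products with the basic quantities $\aa$, $\bb$, $\ab$ already appearing in Lemma~\ref{lem:CR}. Using $\phi_u=(p\tilde\alpha,q\tilde\gamma)$, $\phi_v=(p\tilde\beta,q\tilde\delta)$ and the definition $P(U,V)=(pq^{-1}V,qp^{-1}U)$, one gets $P\phi_u=(p\tilde\gamma,q\tilde\alpha)$ and $P\phi_v=(p\tilde\delta,q\tilde\beta)$. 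Plugging these into the explicit formula for $g$, using \eqref{eq:gammadelta} to eliminate $\tilde\gamma,\tilde\delta$, and finally passing from $(\tilde\alpha,\tilde\beta)$ to the $2\pi/3$-rotated pair $(\alpha,\beta)$, a direct computation collapses the above to
\[
 g(P\phi_u,\phi_u)-g(P\phi_v,\phi_v)=-2(\aa-\bb),\qquad g(P\phi_u,\phi_v)=-2\,\ab,
\]
so that $\Lambda = -\tfrac12(\aa-\bb) + i\,\ab$. The Cauchy--Riemann equations $\Lambda_{\bar z}=0$ then reduce exactly to the two identities of Lemma~\ref{lem:CR}, and holomorphicity in $z$ follows.

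For (ii), since $\dim M=2$ the almost complex structure is automatically integrable, so $M$ is a Riemann surface and any two isothermal charts are related by a holomorphic transition $w=w(z)$. Under such a change one has $\phi_w=(dz/dw)\phi_z$, and the $\mathbb{C}$-bilinearity of the extension of $g$ gives $g(P\phi_w,\phi_w)=(dz/dw)^2 g(P\phi_z,\phi_z)$; together with $dz^2=(dz/dw)^2\,dw^2$ this shows that $\Lambda\,dz^2$ is invariant under the change of coordinates and therefore defines a global holomorphic quadratic differential on $M$.

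The main obstacle is the explicit quaternionic computation reducing $g(P\phi_u,\phi_u)$, $g(P\phi_v,\phi_v)$ and $g(P\phi_u,\phi_v)$ to the symmetric combinations $\aa-\bb$ and $\ab$; once this identification is in place, Lemma~\ref{lem:CR} immediately delivers holomorphicity and step (ii) is essentially formal. A convenient sanity check inside that calculation is the identity $g(P\phi_v,\phi_v)=-g(P\phi_u,\phi_u)$, which one also expects a priori from $\phi_v=J\phi_u$ together with $PJ=-JP$.
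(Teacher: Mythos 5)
Your proof is correct and follows essentially the same route as the paper: expand $\Lambda$ in the $(u,v)$-coordinates, identify its real and imaginary parts as constant-coefficient combinations of $\aa-\bb$ and $\ab$, deduce the Cauchy--Riemann equations from Lemma~\ref{lem:CR}, and obtain globality from invariance under holomorphic changes of isothermal coordinates (a step the paper merely asserts and you justify via integrability of $J$ on a surface). One remark: your value $\Lambda=-\tfrac12(\aa-\bb)+i\,\ab$ and the paper's display~\eqref{eq:lambdaparts} differ by the constant unimodular factor $e^{-2\pi i/3}$ --- your constants are the ones adapted to the rotated pair $(\alpha,\beta)$, whereas the paper's expression matches the unrotated pair $(\tilde\alpha,\tilde\beta)$ --- but since the two expressions differ only by a multiplicative constant, the Cauchy--Riemann equations hold for one exactly when they hold for the other, so the conclusion is unaffected.
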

\begin{proof}
 Using $\phi_v = J\phi_u$, one gets
  \begin{align*}
   4 \Lambda &= g(P\phi_u -iP\phi_v, \phi_u -i\phi_v)\\
             &= 2g(P\phi_u, \phi_u) - 2ig(P\phi_u, J\phi_u),
  \end{align*}
  i.e.,\ $2\Lambda=g(P\phi_u, \phi_u) - ig(P\phi_u,J\phi_u)$. Recall that
   \begin{align*}
     \phi_u &= \Bigl(p\alpha, q\Bigl(\frac{\sqrt{3}}{2}\beta + \frac{1}{2}\alpha\Bigl)\Bigr)\\
     J \phi_u &= \phi_v = \Bigl(p\beta ,q\Bigl(\frac{1}{2}\beta -\frac{\sqrt{3}}{2}\alpha\Bigr)\Bigr).
   \end{align*}
  A simple calculation using the definition of the metric~$g$ and~$P$
  then gives the real and imaginary parts of~$\Lambda$:
  \begin{align}
  \label{eq:lambdaparts}
    \begin{split}
   \re \Lambda &=\frac{1}{4}(\aa -\bb) + \frac{\sqrt{3}}{2}\ab,\\
   \im \Lambda &=\frac{\sqrt{3}}{4}(\aa -\bb) - \frac{1}{2}\ab.
    \end{split}
  \end{align}
  From Lemma~\ref{lem:CR} it follows that $(\re \Lambda)_u = (\im \Lambda)_v$ and
  $(\re \Lambda)_v = -(\im \Lambda)_u$.
  Hence the Cauchy-Riemann equations for $\Lambda=g(P\phi_z,\phi_z)$ hold, so $\Lambda\, dz^2$
  is indeed a holomorphic differential.
  
 Changing isothermal coordinates, we deduce that it is independent of the choice of isothermal coordinates and
 therefore defines a global holomorphic differential on $M$. Note that~$M$ is not required to be simply
 connected.
\end{proof}

\begin{lemma} \label{lem:equiv}
 Let $M$ be an almost complex surface in $\nks$. Then the following are equivalent:
 \begin{enumerate}
 \item $PTM\subset T^\perp M$; 
 \item $\Lambda\, dz^2 = 0$; and
 \item $\aa =\bb$ and $\ab=0$.
\end{enumerate}
\end{lemma}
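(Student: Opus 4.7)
The plan is to run the short cycle $(3)\Rightarrow(2)\Rightarrow(1)\Rightarrow(3)$, leveraging the explicit expressions for $\re\Lambda$ and $\im\Lambda$ already obtained in the proof of Lemma~\ref{lem:hol}.

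The implication $(3)\Rightarrow(2)$ is immediate by substitution into~\eqref{eq:lambdaparts}. For the converse $(2)\Rightarrow(3)$, I would view~\eqref{eq:lambdaparts} as a homogeneous linear system in the unknowns $\aa-\bb$ and $\ab$; its determinant is
\[
\frac{1}{4}\cdot\Bigl(-\frac{1}{2}\Bigr)-\frac{\sqrt{3}}{2}\cdot\frac{\sqrt{3}}{4}=-\frac{1}{2}\neq 0,
\]
so $\Lambda=0$ forces both $\aa=\bb$ and $\ab=0$.

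For $(1)\Leftrightarrow(2)$, recall from the start of the proof of Lemma~\ref{lem:hol} that $2\Lambda=g(P\phi_u,\phi_u)-i\,g(P\phi_u,J\phi_u)$. Using $\phi_v=J\phi_u$, the anticommutation $PJ=-JP$, and the fact that $J$ is a $g$-isometry, one computes
\[
g(P\phi_v,\phi_v)=g(PJ\phi_u,J\phi_u)=-g(JP\phi_u,J\phi_u)=-g(P\phi_u,\phi_u),
\]
so the vanishing of $\re\Lambda$ is equivalent to both $g(P\phi_u,\phi_u)=0$ and $g(P\phi_v,\phi_v)=0$, while the vanishing of $\im\Lambda$ is equivalent to $g(P\phi_u,\phi_v)=0$. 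Since $\phi_u$ and $\phi_v$ span $TM$ and $P$ is symmetric with respect to $g$, these three conditions together say exactly $PTM\perp TM$, i.e.\ $PTM\subset T^\perp M$.

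The main obstacle is essentially absent: the nontrivial computational work, namely the expression of $\re\Lambda$ and $\im\Lambda$ in terms of $\aa$, $\bb$ and $\ab$, was already performed in Lemma~\ref{lem:hol}. What remains here is a single application of the identity $PJ=-JP$ together with an elementary $2\times 2$ linear-algebra observation.
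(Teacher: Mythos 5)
Your proof is correct and takes essentially the same route as the paper: both hinge on the identity $2\Lambda = g(P\phi_u,\phi_u) - i\,g(P\phi_u,J\phi_u)$ to get $(1)\Leftrightarrow(2)$ and on the explicit formulas~\eqref{eq:lambdaparts} to get $(2)\Leftrightarrow(3)$. The only difference is that you spell out details the paper leaves implicit, namely the nonvanishing determinant $-\tfrac{1}{2}$ of the $2\times 2$ system and the identity $g(P\phi_v,\phi_v)=-g(P\phi_u,\phi_u)$ coming from $PJ=-JP$, which justifies that two of the four orthogonality conditions suffice.
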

\begin{proof}
 The almost product structure~$P$ maps tangent vectors into normal vectors if and only if~$g(P\phi_u,\phi_u)$
 and $g(P\phi_u, \phi_v)$ are zero. But $2\Lambda=g(P\phi_u, \phi_u) - ig(P\phi_u,J\phi_u)$, thus the first and second
 assertion are equivalent. Furthermore, $g(P\phi_u,\phi_u)=0$
 and $g(P\phi_u, \phi_v)=0$ if and only if the equations~\eqref{eq:lambdaparts} are zero if and only if~$\aa=\bb$ and~$\ab=0$.
 Thus all assertions are equivalent.
\end{proof}

The following corollary follows immediately from the previous lemma and the fact that a holomorphic differential on a 2-sphere
vanishes. 
\begin{corollary}
\label{cor:lambdazero}
 If we have an almost complex~2-sphere~$S^2$ in $\nks$, then $PTM \subset T^\perp M$.
\end{corollary}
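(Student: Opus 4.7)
The plan is to combine the holomorphicity result of Lemma~\ref{lem:hol} with the equivalences of Lemma~\ref{lem:equiv}, inserting one classical fact from Riemann surface theory as the bridge.

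First I would observe that by Lemma~\ref{lem:hol}, the quadratic differential $\Lambda\,dz^2 = g(P\phi_z,\phi_z)\,dz^2$ is a globally defined holomorphic quadratic differential on $M$. Since $M$ is topologically $S^2$, it carries a unique conformal structure making it biholomorphic to the Riemann sphere $\mathbb{CP}^1$. The key input is then the standard fact that there are no nonzero holomorphic quadratic differentials on $\mathbb{CP}^1$: the canonical bundle has degree $-2$, so $K^{\otimes 2}$ has degree $-4$ and admits no global holomorphic sections. Equivalently, a meromorphic function on $\mathbb{CP}^1$ behaving as $\Lambda(z)$ in any affine chart and transforming with the square of the Jacobian across charts must have a pole of order at least $4$ at $\infty$, forcing $\Lambda \equiv 0$.

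Having deduced $\Lambda\,dz^2 \equiv 0$, I would then invoke the implication (2)$\Rightarrow$(1) of Lemma~\ref{lem:equiv} to conclude that $PTM \subset T^\perp M$ at every point of $M$. This completes the proof.

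I do not expect a genuine obstacle here: the proof is essentially a three-line assembly, and the only non-trivial ingredient is the vanishing of holomorphic quadratic differentials on $S^2$, which is a classical fact. The only thing one might wish to be careful about is that the differential was constructed in isothermal coordinates in the simply connected setting, but Lemma~\ref{lem:hol} already states explicitly that the differential is globally defined on $M$ irrespective of simple connectedness, so no further work is needed.
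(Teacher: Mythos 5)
Your proof is correct and follows exactly the paper's own route: global holomorphicity of $\Lambda\,dz^2$ (Lemma~\ref{lem:hol}), vanishing of holomorphic quadratic differentials on a topological $2$-sphere, and then the implication (2)$\Rightarrow$(1) of Lemma~\ref{lem:equiv}. The paper states this in one line, citing precisely these ingredients; your version merely makes the classical vanishing fact explicit via the degree of $K^{\otimes 2}$ on $\mathbb{CP}^1$.
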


\begin{theorem}
 \label{thm:epsilon}
 The coordinates~$(u,v)$ are isothermal on~$\epsilon$ iff $\Lambda\,dz^2$ vanishes.
 In this case $\epsilon$ corresponds to a  surface in~$\mathbb{R}^3$ with constant mean curvature~$H=-2/\sqrt{3}$.
\end{theorem}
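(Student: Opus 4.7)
My plan is to handle the two assertions in turn, both of which follow directly from the machinery already in place.

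For the first assertion, I would translate the condition that $(u,v)$ be isothermal on $\epsilon$ into a condition on $\alpha = \epsilon_u$ and $\beta = \epsilon_v$. Since $\epsilon$ is an immersion into $\mathbb{R}^3 \cong \im\mathbb{H}$ with partial derivatives $\alpha$ and $\beta$, the first fundamental form of $\epsilon$ has coefficients $\aa$, $\ab$, $\bb$. Thus $(u,v)$ are isothermal on $\epsilon$ precisely when $\aa = \bb$ and $\ab = 0$, which by Lemma~\ref{lem:equiv} is exactly the vanishing of the holomorphic differential $\Lambda\,dz^2$.

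For the mean curvature claim, assuming $\Lambda\,dz^2 = 0$, I would read off the answer directly from the $H$-surface equation~\eqref{eq:DE}. Set $\lambda^2 = \aa = \bb$, which is nonzero wherever $\epsilon$ is an immersion. Since $\alpha \perp \beta$ and $|\alpha| = |\beta|$, one has $|\alpha \times \beta| = \lambda^2$, so the oriented unit normal to $\epsilon$ is $N = (\alpha \times \beta)/\lambda^2$. The standard formula for the mean curvature vector of a surface in isothermal coordinates is $\vec{H} = (\epsilon_{uu} + \epsilon_{vv})/(2\lambda^2)$; substituting the right-hand side of~\eqref{eq:DE} and pairing with $N$ produces $H = -2/\sqrt{3}$ after a one-line cancellation of $\lambda^2$.

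The whole argument is a bookkeeping exercise rather than a substantive proof; there is no real obstacle. The only points to watch are the sign convention for $H$, which is fixed by the ordering of $u$ and $v$ through the orientation of $N$, and the implicit assumption that $\epsilon$ is an immersion so that $\lambda^2 \neq 0$ and the formulas are well-defined.
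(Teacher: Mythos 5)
Your proposal is correct and follows essentially the same route as the paper: the first assertion is exactly the application of Lemma~\ref{lem:equiv} (since $\epsilon_u=\alpha$, $\epsilon_v=\beta$), and the mean curvature claim is the paper's computation $2H\,\epsilon_u\times\epsilon_v=\epsilon_{uu}+\epsilon_{vv}=-\tfrac{4}{\sqrt{3}}\epsilon_u\times\epsilon_v$, which you merely unpack by writing the unit normal and conformal factor explicitly. Your added remark that $\lambda^2\neq 0$ is also justified, since $(u,v)$ are isothermal for the immersion $\phi$, so $\aa+\bb=g(\phi_u,\phi_u)\neq 0$.
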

\begin{proof}
 Since $\epsilon_u=\alpha$ and $\epsilon_v=\beta$, the first assertion follows from
Lemma~\ref{lem:equiv}. From equation~\eqref{eq:DE} we know that
 \[
  2H\epsilon_u\times\epsilon_v = \epsilon_{uu}+\epsilon_{vv} =-\frac{4}{\sqrt{3}}\epsilon_u\times\epsilon_v
 \]
 This proves the theorem.
\end{proof}

\begin{corollary}
 \label{cor:metrics}
Let $g$ be the induced metric on an almost complex surface~$M$
in~$\nks$ and $g'$ the metric on the associated surface in~$\mathbb{R}^3$.
If $\Lambda\,dz^2=0$, then $g=2 g'$.
\end{corollary}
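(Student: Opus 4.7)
The plan is to compute the induced metric $g$ on $M$ directly, in the frame $\{\phi_u,\phi_v\}$, and compare it to the flat metric $g'=\epsilon_u\cdot\epsilon_u\,du^2+2\,\epsilon_u\cdot\epsilon_v\,du\,dv+\epsilon_v\cdot\epsilon_v\,dv^2$ on the associated $H$-surface. Since $\epsilon_u=\alpha$ and $\epsilon_v=\beta$, the hypothesis $\Lambda\,dz^2=0$ together with Lemma~\ref{lem:equiv} gives $\aa=\bb$ and $\ab=0$, so $g'=\aa\,(du^2+dv^2)$. Hence it suffices to show that in this case $g(\phi_u,\phi_u)=g(\phi_v,\phi_v)=2\aa$ and $g(\phi_u,\phi_v)=0$.

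To compute $g(\phi_u,\phi_u)$, I would plug $\phi_u=(p\tilde\alpha,q\tilde\gamma)$ into the explicit expression for $g$ given in Section~2. Using that $p,q$ are unit quaternions so that left multiplication by $p$ (respectively $q$) is an isometry of $\mathbb{R}^4$, the four inner products $\metric{\cdot\,}{\cdot}$ that appear reduce to $\tilde\alpha\cdot\tilde\alpha$, $\tilde\gamma\cdot\tilde\gamma$ and two copies of $\tilde\alpha\cdot\tilde\gamma$. Substituting $\tilde\gamma=\tfrac{\sqrt3}{2}\tilde\beta+\tfrac12\tilde\alpha$ from \eqref{eq:gammadelta} and simplifying gives
\[
 g(\phi_u,\phi_u)=\tilde\alpha\cdot\tilde\alpha+\tilde\beta\cdot\tilde\beta.
\]
The same procedure, with $\tilde\delta=\tfrac12\tilde\beta-\tfrac{\sqrt3}{2}\tilde\alpha$, yields $g(\phi_v,\phi_v)=\tilde\alpha\cdot\tilde\alpha+\tilde\beta\cdot\tilde\beta$; and $g(\phi_u,\phi_v)=0$ may either be computed in the same way or, more cheaply, deduced from $\phi_v=J\phi_u$ and the $J$-invariance of $g$ (so $(u,v)$ is automatically isothermal on $M$).

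The last step is to return from the tilded to the untilded variables. Because the passage $\alpha=\cos\theta\,\tilde\alpha+\sin\theta\,\tilde\beta$, $\beta=-\sin\theta\,\tilde\alpha+\cos\theta\,\tilde\beta$ is a planar rotation applied simultaneously to two $\mathbb{R}^3$-valued functions, the quadratic expression $\tilde\alpha\cdot\tilde\alpha+\tilde\beta\cdot\tilde\beta$ is preserved, and the pair $(\tilde\alpha\cdot\tilde\alpha-\tilde\beta\cdot\tilde\beta,\,2\tilde\alpha\cdot\tilde\beta)$ is carried to $(\aa-\bb,\,2\ab)$ by a further rotation through $2\theta$. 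In particular $\Lambda=0$, which forces $\aa=\bb$ and $\ab=0$, is equivalent to $\tilde\alpha\cdot\tilde\alpha=\tilde\beta\cdot\tilde\beta$ and $\tilde\alpha\cdot\tilde\beta=0$. Under $\Lambda=0$ we therefore obtain
\[
 g(\phi_u,\phi_u)=\tilde\alpha\cdot\tilde\alpha+\tilde\beta\cdot\tilde\beta=\aa+\bb=2\aa=2g'(\partial_u,\partial_u),
\]
and the same for the $vv$- and $uv$-components, giving $g=2g'$. The only non-trivial obstacle is the algebraic simplification of $g(\phi_u,\phi_u)$ from the raw formula: the coefficients $\tfrac43$ and $-\tfrac23$ in the definition of $g$ and the $\tfrac{\sqrt3}{2},\tfrac12$ from \eqref{eq:gammadelta} must conspire to cancel all the $\tilde\alpha\cdot\tilde\beta$ cross terms, which is really what encodes the fact that the nearly Kähler metric on $\nks$ is the Hermitian metric associated to the product metric.
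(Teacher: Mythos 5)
Your proposal is correct and follows essentially the same route as the paper: the paper's proof consists precisely of the identity $g(\phi_u,\phi_u)=\aa+\bb$ (stated there without computation) together with $\epsilon_u=\alpha$, $\epsilon_v=\beta$ and the hypothesis $\aa=\bb$, $\ab=0$. Your contribution is simply to supply the omitted verification of that identity via the explicit formula for $g$, the relations \eqref{eq:gammadelta}, and the rotation-invariance of $\tilde\alpha\cdot\tilde\alpha+\tilde\beta\cdot\tilde\beta$, all of which check out.
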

\begin{proof}
 If $g$ is the induced metric on $M$, then $g(\phi_u,\phi_u) = \aa+\bb$, which is equal
 to~$2 \aa$ by our assumption. Recall that $\epsilon_u = \alpha$,
 $\epsilon_v=\beta$ and so the corollary follows.
\end{proof}

Now we are able to prove the remaining main results.

\begin{theorem}
\label{thm:totallygeo}
If $M$ is an almost complex surface of $\nks$  with parallel second fundamental form, then $M$ is totally geodesic.
\end{theorem}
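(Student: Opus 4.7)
The plan is to combine Theorem~\ref{thm:curvature} with the $H$-surface correspondence established in Theorem~\ref{thm:epsilon} to exhaust all non-totally-geodesic possibilities. By Theorem~\ref{thm:curvature}, under the parallel hypothesis we have either $PTM=TM$ (in which case $M$ is already flat and totally geodesic) or $PTM\subset T^\perp M$, and in the latter case $M$ is either totally geodesic with constant Gaussian curvature $2/3$ or has constant Gaussian curvature $5/18$. Only the last possibility remains open, and it suffices to exclude it.

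So the key step is to assume, for contradiction, that $PTM\subset T^\perp M$ and that $K=5/18$ on $M$. Passing to a simply connected neighborhood and applying Lemma~\ref{lem:equiv}, we have $\Lambda\,dz^2=0$, and Theorem~\ref{thm:epsilon} yields an associated surface $\epsilon$ in $\mathbb R^3$ with constant mean curvature $H=-2/\sqrt 3$. By Corollary~\ref{cor:metrics} the induced metric on $\epsilon$ is $g/2$, hence the Gauss curvature of $\epsilon$ is constant and equal to $K'=2K=5/9$. Thus $\epsilon$ is a surface in $\mathbb R^3$ whose mean and Gauss curvatures are both constant, with $K'\neq H^2=4/3$.

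To finish, note that the principal curvatures $k_1,k_2$ of $\epsilon$ are then constants with $k_1+k_2=2H$ and $k_1k_2=K'$; since $K'\neq H^2$ they are distinct. A classical consequence of the Codazzi equations, written in lines of curvature coordinates with first fundamental form $E\,du^2+G\,dv^2$, is that $(k_1)_v\,E=\tfrac{1}{2}E_v(k_2-k_1)$ and $(k_2)_u\,G=\tfrac{1}{2}G_u(k_1-k_2)$, so constancy of the $k_i$ together with $k_1\neq k_2$ forces $E_v=G_u=0$. The metric is then flat, giving $K'=k_1k_2=0$, which contradicts $K'=5/9>0$. The case $K=5/18$ is therefore impossible, and $M$ must be totally geodesic.

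The main obstacle is the last classical step, asserting that a regular surface in $\mathbb R^3$ with two distinct constant principal curvatures is flat (in fact a piece of a right circular cylinder). Without this structural result one has no way to convert the algebraic data $H^2\neq K'$ into a genuine contradiction. Everything else in the argument is a direct chaining of Theorems~\ref{thm:curvature} and~\ref{thm:epsilon} with Lemma~\ref{lem:equiv} and Corollary~\ref{cor:metrics}.
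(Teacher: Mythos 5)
Your proof is correct and follows essentially the same route as the paper: both chain Theorem~\ref{thm:curvature}, Lemma~\ref{lem:equiv}, Theorem~\ref{thm:epsilon} and Corollary~\ref{cor:metrics} to produce an associated CMC surface in $\mathbb{R}^3$ with constant Gaussian curvature $5/9$, and then rule this out. The only difference is that the paper simply cites the classical classification of surfaces with constant mean and constant Gaussian curvature (plane, circular cylinder, round sphere), whereas you prove the needed dichotomy ($K'=0$ or $K'=H^2$) directly from the Codazzi equations in curvature-line coordinates, which makes that final step self-contained.
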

\begin{proof}
 Suppose $M$ is not totally geodesic. Then the associated CMC~surface~$\epsilon$
 has Gaussian curvature~$\tfrac{5}{9}$ by Theorem~\ref{thm:curvature}
 and~Corollary~\ref{cor:metrics}.
 But this is not possible since a surface in~$\mathbb{R}^3$ with constant curvature and constant mean curvature
 is either a plane, a circular cylinder or a sphere. The first two examples have curvature $0$, whereas the last one is totally umbilical and therefore, by Theorem~\ref{thm:epsilon}, has curvature $H^2=\tfrac 43$. The corresponding almost complex surface then has constant curvature $\tfrac 23$. 
\end{proof}

\begin{theorem}
 \label{thm:geodesicS2}
 An almost complex topological 2-sphere~$S^2$ in the nearly~K\"ahler~$\nks$ is totally geodesic.
\end{theorem}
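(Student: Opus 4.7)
The plan is to combine the CMC-surface correspondence from Section~3 with the classical Hopf theorem on CMC spheres in $\mathbb{R}^3$. Since $M \cong S^2$, Corollary~\ref{cor:lambdazero} immediately gives $PTM \subset T^\perp M$, equivalently $\Lambda\,dz^2 = 0$. Because $S^2$ is simply connected, the construction preceding Theorem~\ref{thm:correspondencethm}, together with Theorem~\ref{thm:epsilon}, produces a globally defined conformal immersion $\epsilon\colon S^2 \to \mathbb{R}^3$ with constant mean curvature $H = -2/\sqrt{3}$.

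Next I will invoke Hopf's classical theorem: any CMC immersion of $S^2$ into $\mathbb{R}^3$ is a round sphere. Hence $\epsilon(S^2)$ is a sphere of radius $\sqrt{3}/2$ and has constant Gaussian curvature $K_\epsilon = H^2 = 4/3$. By Corollary~\ref{cor:metrics} the induced metric on $M$ is twice that on $\epsilon$, so $M$ itself has constant Gaussian curvature $K = K_\epsilon/2 = 2/3$.

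To finish, I will feed $K = 2/3$ into the Gauss equation on $M$, essentially recycling Case~2 of the proof of Theorem~\ref{thm:curvature}. For any unit tangent vector $v$ at a point of $M$, the hypothesis $PTM \subset T^\perp M$ forces $g(Pv,v) = g(PJv,v) = 0$; plugging into the explicit formula for $\tilde R$ from Section~2 shows that the $P$-dependent terms drop out and the tangent part of $\tilde R(v,Jv)v$ equals $-\tfrac{2}{3}\,Jv$. Using $R(v,Jv)v = -K\,Jv$ on a surface together with the identities $h(X,JY) = Jh(X,Y)$ and $A_{J\xi}X = -A_\xi JX$, the Gauss equation collapses to $K = \tfrac{2}{3} - 2\|h(v,v)\|^2$. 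Combined with $K = 2/3$, this forces $h(v,v) = 0$ for every unit tangent $v$, and polarization then gives $h \equiv 0$, i.e. $M$ is totally geodesic.

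The main obstacle I anticipate is the globality step: one must verify that the associated map $\epsilon$ extends as an immersion of all of $S^2$ into $\mathbb{R}^3$, rather than only as a local CMC piece. This rests on simply-connectedness of $S^2$, which allows the closed one-form $\alpha\,du + \beta\,dv$ to be integrated to a single-valued function, together with the vanishing of $\Lambda\,dz^2$, which ensures that isothermal coordinates on $M$ remain isothermal on $\epsilon$ so that no degeneracies arise when passing through different coordinate patches. Once this global immersion is in place, Hopf's theorem and the Gauss-equation calculation close the argument cleanly, without any need for the parallel-$h$ hypothesis used in Theorem~\ref{thm:curvature}.
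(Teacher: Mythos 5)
Your proposal is correct and follows essentially the same route as the paper's own proof: vanishing of the holomorphic differential $\Lambda\,dz^2$ on $S^2$ gives $PTM\subset T^\perp M$, the associated globally defined CMC surface is a round sphere by Hopf's theorem, the metric factor $2$ from Corollary~\ref{cor:metrics} yields $K=\tfrac{2}{3}$ on $M$, and the Gauss equation $K=\tfrac{2}{3}-2\|h(v,v)\|^2$ then forces $h\equiv 0$. Your extra attention to the globality of $\epsilon$ (via simple connectedness of $S^2$) is a point the paper leaves implicit, but it is the same argument.
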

\begin{proof}
 By Lemma~\ref{lem:equiv} the differential $\Lambda\, dz^2$ vanishes, so
  we have a CMC~$2$-sphere in~$\mathbb{R}^3$.
  This is a round sphere (by a theorem of~H.~Hopf), hence it is totally umbilical.
  Therefore the Gauss curvature of the CMC~$2$-sphere is~$H^2=4/3$.
 Hence the Gauss curvature of the almost complex sphere in $\nks$ is $2/3$.
 The Gauss equation then says
 \[
   2\|h(v,v)\|^2 = \frac{2}{3} - K = 0,
 \]
 so the topological $2$-sphere is totally geodesic.
\end{proof}

\begin{remark}
From this theorem it follows that a compact almost complex surface~$M$ with Gaussian curvature~$K\geq 0$
has constant curvature~$0$ or~$\tfrac{2}{3}$. Indeed, if the curvature on~$M$ is not identically zero
then by Gauss-Bonnet~$M$ is a two-sphere. Then by the previous theorem~$M$ is totally geodesic
and has curvature~$\tfrac{2}{3}$.
\end{remark}

\section{Examples}

In this last section we discuss two examples of totally geodesic almost complex
surfaces in~$\nks$.

\begin{example}
Consider the immersion
\begin{align*}
 f\colon \mathbb{R}^2 \to \nks \colon (s,t) \mapsto (\cos s + i\sin s, \cos t + i \sin t).
\end{align*}
Then we have
\begin{align*}
 f_s &= (-\sin s + i\cos s, 0),\\
 f_t &= (0, -\sin t + i\cos t),\\
 Jf_s &= \frac{1}{\sqrt{3}}\bigl(\sin s- i \cos s, 2(\sin t - i \cos t)\bigr),\\
 Jf_t &= \frac{1}{\sqrt{3}}\bigl(-2(\sin s- i \cos s), -\sin t + i \cos t\bigr).
\end{align*}
Hence the immersion~$f$ is almost complex. Furthermore, $Pf_s = f_t$, so the almost product
structure maps tangent vector to tangent vectors.
Also, $g(f_s,f_s) = g(f_t,f_t) = \frac{4}{3}$ and  $g(f_s,f_t) = -\frac{2}{3}$
are constant, so~$f$ is flat. A calculation gives $\tilde R(f_s,f_t,f_t,f_s) = 0$, so
that by the Gauss equation and equation~\eqref{eq:ac1} this immersion is totally geodesic as well.
\end{example}

We now show that the above example is the only almost complex surface for which the almost product structure $P$ maps tangent vectors to tangent vectors.
\begin{theorem} 
\label{thm:flat}
Let $M$ be an almost complex surface for which $P$ preserves the tangent space. 
Then $M$ is locally congruent with the immersion
\begin{align*}
 f\colon \mathbb{R}^2 \to \nks \colon (s,t) \mapsto (\cos s + i\sin s, \cos t + i \sin t).
\end{align*}
\end{theorem}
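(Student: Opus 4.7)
The plan is to first show $M$ is flat and totally geodesic, and then integrate the resulting structure explicitly to recover $f$ up to an ambient isometry.

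I would begin by exploiting the splitting of $TM$ induced by $P$. Since $P|_{TM}$ is a self-adjoint involution anticommuting with $J$, it splits $TM$ orthogonally into its $\pm 1$-eigenlines $T$ and $JT$. Choose a local unit section $e_{1}$ of $T$ and set $e_{2}=Je_{1}$, so that $Pe_{1}=e_{1}$ and $Pe_{2}=-e_{2}$. Lemma~\ref{lem:PH} gives $(\nabla_{X}P)Y=0$ for tangent $X,Y$; differentiating $Pe_{1}=e_{1}$ then yields $\nabla_{X}e_{1}\in T$, and since $\nabla_{X}e_{1}\perp e_{1}$ and $T=\mathbb{R}e_{1}$ the frame $\{e_{1},e_{2}\}$ is $\nabla$-parallel. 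In particular $M$ is flat.

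For $h\equiv 0$, I would plug $Pe_{1}=e_{1}$, $Pe_{2}=-e_{2}$, $Je_{1}=e_{2}$ into the explicit formula for $\tilde R$. Each of its three blocks contributes a scalar multiple of $e_{2}$ to $\tilde R(e_{1},e_{2})e_{1}$ and these cancel, giving $\tilde R(e_{1},e_{2})e_{1}=0$. The Gauss equation combined with $R\equiv 0$ then reduces to $A_{h(e_{1},e_{2})}e_{1}=A_{h(e_{1},e_{1})}e_{2}$; pairing with $e_{2}$ and applying the identities $h(e_{1},e_{2})=Jh(e_{1},e_{1})$ and $h(e_{2},e_{2})=-h(e_{1},e_{1})$ from~\eqref{eq:ac1} converts this to $\|h(e_{1},e_{1})\|^{2}=-\|h(e_{1},e_{1})\|^{2}$, whence $h\equiv 0$.

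Since the parallel frame satisfies $[e_{1},e_{2}]=0$, there exist local coordinates $(u,v)$ with $\phi_{u}=e_{1}$ and $\phi_{v}=J\phi_{u}=e_{2}$, automatically isothermal. Writing $\phi_{u}=(p\tilde\alpha,q\tilde\gamma)$ and $\phi_{v}=(p\tilde\beta,q\tilde\delta)$ as in Section~3, the eigenvector conditions $P\phi_{u}=\phi_{u}$ and $P\phi_{v}=-\phi_{v}$ translate, via $P(U,V)=(pq^{-1}V,qp^{-1}U)$, into $\tilde\gamma=\tilde\alpha$ and $\tilde\delta=-\tilde\beta$; combined with~\eqref{eq:gammadelta} this forces $\tilde\alpha=\sqrt{3}\,\tilde\beta$. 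Substituting into~\eqref{eq:de} and the analogous integrability equation $\tilde\gamma_{v}-\tilde\delta_{u}=2\tilde\gamma\times\tilde\delta$ for $q$ kills both cross-products and yields the two equations $\tilde\beta_{u}=\sqrt{3}\,\tilde\beta_{v}$ and $\tilde\beta_{u}=-\sqrt{3}\,\tilde\beta_{v}$, so $\tilde\alpha,\tilde\beta$ are constant imaginary quaternions. Using the isometry $(p,q)\mapsto(apc^{-1},bqc^{-1})$ to set $p(0,0)=q(0,0)=1$ and to rotate $\tilde\beta$ into $\mu i$ (so that $\tilde\alpha=\sqrt{3}\mu i$), integration yields $p=e^{i\mu(\sqrt{3}u+v)}$ and $q=e^{i\mu(\sqrt{3}u-v)}$, and the change of variables $s=\mu(\sqrt{3}u+v)$, $t=\mu(\sqrt{3}u-v)$ identifies $\phi$ with $f$.

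I expect the main obstacle to be the explicit verification $\tilde R(e_{1},e_{2})e_{1}=0$, since this is the only place where the specific nearly K\"ahler curvature enters beyond the formal identities for $P$, $J$, and $G$: the cancellation between the $\tfrac{5}{12}$-, $\tfrac{1}{12}$-, and $\tfrac{1}{3}$-blocks of $\tilde R$ has to be tracked by hand, and the $P$-block contributes precisely because $e_{1}$ lies in the $+1$-eigenspace of $P$.
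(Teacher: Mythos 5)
Your proposal is correct and follows essentially the same route as the paper: the parallel $P$-eigenframe forces $M$ to be flat, the eigenvector conditions $P\phi_u=\phi_u$, $P\phi_v=-\phi_v$ give $\tilde\gamma=\tilde\alpha$, $\tilde\delta=-\tilde\beta$ and hence $\tilde\alpha=\sqrt{3}\,\tilde\beta$, the two integrability equations then make $\tilde\beta$ constant, and explicit integration plus a linear change of variables recovers $f$ (the paper does the same computation after rotating to $\alpha=0$, $\beta=-2\tilde\beta$ and invoking the $H$-system). Your middle paragraph establishing $h\equiv 0$ from $\tilde R(e_1,e_2)e_1=0$ and the Gauss equation is correct but superfluous, since the integration in your final step already determines the immersion completely (and it happens to be totally geodesic); the paper omits that step altogether.
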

\begin{proof} 
The endomorphism $P$ maps tangent vectors to tangent vectors, is symmetric and compatible with the metric
and anti-commutes with~$J$. From this, it follows that~$P$ at every point of~$M$ has two different
eigenvalues, so we can construct a global orthonormal frame $e_1,e_2$ such that 
\begin{align*}²²		
&Pe_1=e_1\\
&Pe_2=-e_2.
\end{align*}
However it now follows that
\begin{equation*}
0=(\nabla_X P)e_1 =\nabla_X e_1 -P \nabla_X e_1=2 \nabla_X e_1.
\end{equation*}
In the last equation we used that $g(\nabla_X e_1,e_1) = 0$ and~$Pe_2=-e_2$.  
Hence $\nabla_{e_i}e_j=0$, and we know that the immersion is flat and we can choose flat coordinates $u$ and $v$
such that $e_1= \partial_u$ and $e_2=\partial_v$. 
As these coordinates are flat we can use the previous formulas. 

As $P\phi_u=\phi_u$, we must have that
\[\tilde \alpha =\sqrt{3} \tilde\beta.\]
Hence, $\alpha=0$ and $\beta= -2 \tilde \beta$. As $e_1$ and $e_2$ are orthonormal we also have that $\beta$ has constant unit length. 
  
We now fix the initial condition by a rotation in $\mathbb R^3$ 
(or equivalently a conjugation by a unit quaternion~$c$ in $\nks$)  
in such a way that $\epsilon_v(0,0)=\beta(0,0)=(1,0,0)$. Note that $\alpha= \epsilon_u=0$. We then see that the differential equation for the 
$H$-system implies that $\beta$ is constant. 
We also choose initial conditions such that $p(0,0)=(1,0,0,0)$ and $q(0,0)=(1,0,0,0)$. 

It follows that 
\begin{align*}
\tilde \alpha&=\bigl(\frac{\sqrt{3}}{2},0,0\bigr), 
&\tilde \beta&= \bigl(-\frac{1}{2},0,0\bigr),\\
\tilde \gamma&= (0,0,0), 
&\tilde \delta&=(-1,0,0).
\end{align*}

So we get that $q_u= 0$ and $q_v = -qi$, implying that $q= (\cos v,-\sin v,0,0)$. Similarly,
$p_u= p \tfrac{\sqrt{3}i}{2}$ and $p_v=-\tfrac{i}{2}p$ has as solution
\[ 
 p(u,v)=\bigl(\cos(\tfrac{\sqrt{3}}{2}u-\tfrac 12 v),\sin(\tfrac{\sqrt{3}}{2}u-\tfrac 12 v),0,0\bigr).
\]
A change of variable now completes the proof of the theorem. 
\end{proof}

\begin{example}
 Define
\[
   f \colon S^2 \subset \im \mathbb{H} \to \nks \colon x \mapsto \frac{1}{2}(1-\sqrt{3}x, 1+\sqrt{3}x).
\]
In order to do an explicit calculation we choose 
\[
  x(u,v) = (\sin u\cos v, \sin u \sin v, \cos u)
\]
as a parametrization for~$S^2$. Also note that, if we write~$f(u,v)=\bigl(p(u,v),q(u,v)\bigr)$,
we have $p(u,v)(q(u,v))^{-1}=-q(u,v)$ and~$q(u,v)(p(u,v))^{-1}=-p(u,v)$.
A calculation then gives
\begin{align*}
 f_u &= \frac{\sqrt{3}}{2}(-x_u, x_u), 
&  f_v &= \frac{\sqrt{3}}{2}(-x_v, x_v),\\
  Jf_u &= \frac{\sqrt{3}}{2}(-xx_u, xx_u), 
 & Jf_v &= \frac{\sqrt{3}}{2}(-xx_v, xx_v).
\end{align*}
From the parametrization of~$x$ it follows that~$xx_u=\sin u\, x_v$ and~$xx_v= -\sin u\, x_u$. Thus~$Jf_u =\sin u\, f_v$
and~$M=f(S^2)$ is an almost complex surface.
Furthermore using the very definition of~$P$ and the metric~$g$ we obtain~$g(Pf_u,f_u)=0$ 
and~$g(Pf_u,f_v)=0$, thus~$P$ maps tangents vector into normal vectors. Therefore it follows
from the expression of the curvature tensor~$\tilde R$ that the sectional curvature of the
plane spanned by~$f_u$ and~$f_v$ is~$\tfrac{2}{3}$. From the main theorem it follows 
that~$M$ is totally geodesic, so~$M$ has constant curvature~$\tfrac{2}{3}$.
\end{example}

We now put our results together to conclude with the following theorem.
\begin{theorem} Any almost complex surface with parallel second fundamental form is locally congruent 
to one of the above two examples.
\end{theorem}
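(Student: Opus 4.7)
The plan is to combine the previously established results to handle the two structural cases identified by Theorem~\ref{thm:curvature}. By Theorem~\ref{thm:totallygeo}, $M$ is totally geodesic; by Theorem~\ref{thm:curvature}, at every point either $PTM = TM$ or $PTM \subset T^\perp M$ (and by continuity this alternative is locally constant). In the first case, Theorem~\ref{thm:flat} directly yields local congruence with the first example, so I would focus on the second case.

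Assume $PTM \subset T^\perp M$. Then Theorem~\ref{thm:curvature} forces the Gaussian curvature to be the constant $K = 2/3$, and by Lemma~\ref{lem:equiv} the holomorphic differential $\Lambda\,dz^2$ vanishes identically. Passing to a simply connected neighborhood, Theorem~\ref{thm:correspondencethm} produces an associated surface $\epsilon$ in $\mathbb{R}^3$, which by Theorem~\ref{thm:epsilon} has constant mean curvature $H = -2/\sqrt{3}$. By Corollary~\ref{cor:metrics}, the induced metrics satisfy $g = 2g'$, so the Gaussian curvature of $\epsilon$ is $K' = 2K = 4/3 = H^2$. A CMC surface in $\mathbb{R}^3$ whose constant Gaussian curvature equals $H^2$ is totally umbilical, and hence is a round sphere of radius $1/|H| = \sqrt{3}/2$.

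Any two such round spheres are Euclidean-congruent, so the ``moreover'' part of Theorem~\ref{thm:correspondencethm} implies that any two simply connected totally geodesic almost complex surfaces of $\nks$ with $PTM \subset T^\perp M$ are congruent in $\nks$. Since Example~2 realizes precisely this situation---the discussion there verifies that the map is almost complex, that $P$ sends $f_u$ and $f_v$ into the normal bundle, and that the sectional curvature of the tangent plane is $2/3$, whence total geodesy follows from the main theorem applied there---the surface $M$ must be locally congruent to Example~2.

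The main conceptual obstacle is ensuring the identification $K' = H^2$ is correctly set up: one must be careful about the factor-of-two rescaling in Corollary~\ref{cor:metrics} and about the sign/normalization of $H$ in Theorem~\ref{thm:epsilon}, since only after this bookkeeping can one invoke the classical rigidity statement that a constant mean curvature surface with constant Gaussian curvature $K' = H^2$ must be a round sphere. Everything else in the proof is an assembly of prior results; no new computation in $\nks$ is required.
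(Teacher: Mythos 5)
Your proof is correct and follows essentially the same route as the paper's: split according to whether $PTM=TM$ or $PTM\subset T^\perp M$, invoke Theorem~\ref{thm:flat} in the first case, and in the second combine Theorems~\ref{thm:curvature}, \ref{thm:totallygeo}, \ref{thm:epsilon}, Corollary~\ref{cor:metrics} and the rigidity of CMC spheres with the correspondence Theorem~\ref{thm:correspondencethm}. Your version is in fact slightly more careful than the paper's, spelling out the local constancy of the dichotomy, the curvature bookkeeping $K'=2K=H^2$, and the umbilicity argument, all of which the paper leaves implicit.
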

\begin{proof} 
If $P$ maps tangent vectors to tangent vectors, we obtain the first example by Theorem~\ref{thm:flat}.
So we may assume that $P$ maps tangent vectors into normal vectors and that~$M$ has constant curvature~$\tfrac{2}{3}$
by Theorems~\ref{thm:curvature} and~\ref{thm:totallygeo}.
Furthermore, by Theorem~\ref{thm:epsilon} and~Corollary~\ref{cor:metrics}, 
we can locally associate to~$M$ a surface in~$\mathbb{R}^3$ with constant Gaussian curvature~$\tfrac{4}{3}$ and constant mean curvature 
$H=-\tfrac{2}{\sqrt{3}}$. Hence these surfaces are totally umbilical and therefore mutually congruent. 
The correspondence theorem (Theorem~\ref{thm:correspondencethm}) now completes the proof.
\end{proof}

\nocite{*}
\bibliographystyle{amsplain}
\bibliography{nearlykahlerbib}

\end{document}